\titleformat{\chapter}[block]
    {\normalfont\huge\bfseries}{\thechapter}{20pt}{\Huge}
\theoremstyle{plain}
  \newtheorem{theorem}{Theorem}[section]
  \newtheorem{lemma}[theorem]{Lemma}
  \newtheorem{proposition}[theorem]{Proposition}
  \newtheorem{corollary}[theorem]{Corollary}
  \newtheorem{remark}[theorem]{Remark}
  \newtheorem{definition}[theorem]{Definition}
\theoremstyle{definition}
\numberwithin{equation}{section}
\DeclareMathOperator*{\cls}{cl}
\DeclareMathOperator*{\vol}{vol}
\DeclareMathOperator*{\intr}{int}
\DeclareMathOperator{\conv}{conv}
\DeclareMathOperator{\aff}{aff}
\DeclareMathOperator{\relint}{relint}
\newcommand{\R}{\mathbb R}
\newcommand{\VG}{vertex generated}
\newcommand{\epsl}{\varepsilon}
\newcommand{\iprod}[2]{\langle #1,#2 \rangle}
\def\sub{\subseteq}%
\def\volk#1{{\rm vol}_{#1}}
\def\RR{\mathbb{R}}%
\def\bs#1{\color{teal} #1 \color{black}}
\newenvironment{manualtheorem}[1]{%
	\manualtheoreminner
}{\endmanualtheoreminner}
\let\originalleft\left
\let\originalright\right
\renewcommand{\left}{\mathopen{}\mathclose\bgroup\originalleft}
\renewcommand{\right}{\aftergroup\egroup\originalright}
\title{Vertex generated polytopes}
\author{Shiri Artstein-Avidan\thanks{School of Mathematics, Tel Aviv University, Tel Aviv, Israel; \href{mailto:shiri@tauex.tau.ac.il}{shiri@tauex.tau.ac.il}.}  \and Tomer Falah\thanks{Faculty of Mathematics, Technion– Israel Institute of Technology, Israel; \href{mailto:fatomer@campus.technion.ac.il}{fatomer@campus.technion.ac.il}.} \and Boaz A. Slomka\thanks{Department of Mathematics, The Open University of Israel, Ra'anana, Israel; \href{mailto:slomka@openu.ac.il}{slomka@openu.ac.il}.}}
\date{}
\begin{document}
  \maketitle

\begin{abstract}

In this paper we define and investigate a class of polytopes which we call  ``vertex generated'' consisting of polytopes which are the average of their $0$ and $n$ dimensional faces. We show many results regarding this class, among them: that the class contains all zonotopes, that it is dense in dimension $n=2$, that any polytope can be summed with a zonotope so that the sum is in this class, and that a strong form of the celebrated ``Maurey Lemma'' holds for polytopes in this class.  We introduce for every polytope a parameter which measures how far it is from being vertex-generated, and show that when this parameter is small, strong covering properties hold.  
\end{abstract}

\section{Introduction}
\renewcommand{\thefootnote}{\fnsymbol{footnote}} 
\footnotetext{\emph{Key words and phrases.} Brunn-Minkowski inequality, convex bodies, polytopes, zonotopes, covering.}
\renewcommand{\thefootnote}{\arabic{footnote}} 
\renewcommand{\thefootnote}{\fnsymbol{footnote}} 
\footnotetext{\emph{2010 Mathematics Subject Classification.} 52A40, 52B12, 52C17.}
\renewcommand{\thefootnote}{\arabic{footnote}}

Given a polytope $P\subset \RR^n$ we let $V(P)$ denote the set of its vertices. Our starting point is the following simple observation which  follows directly from  Caratheodory's theorem: for
every polytope $P\sub \R^n$,  
 	\begin{equation}\label{eq:carat} P=\frac{n}{n+1} P+\frac 1{n+1}V(P).\end{equation}
 \noindent Here $A+B = \{a+b: a\in A, b\in B\}$ denotes the Minkowski sum of the sets $A$ and $B$. 
Indeed, every $x\in P$ can be written as $x = \sum_{i=1}^{n+1}\lambda_i v_i$
for some $v_i \in V(P)$ and $\lambda_i \ge 0$ with $ \sum_{i=1}^{n+1}\lambda_i = 1$. Assuming without loss of generality that $\lambda_i$ are decreasing, so that $\lambda_1 \ge  {1}/({n+1})$, we get $x = v_1/({n+1}) +  {ny}/({n+1})$ where $y=(({n+1})/{n})(\sum_{i=2}^{n+1}\lambda_i x_i + (\lambda_1-1/({n+1}))v_1)\in P$. 


In \cite{Schneider-1678} Schneider defined the following parameter, measuring in an affine way the deviation of a compact set $A$ from its convex hull: 
\[ c(A) = \inf\{ t \ge 0: A + t\, {\rm conv}(A) {\rm~is~convex}\}
= 
\inf\{ t \ge 0: A + t \,{\rm conv}(A)  
= (1+t) {\rm conv}(A)\}. \]

In this note the set $A$ will be the set of vertices of a given polytope, and it will be more convenient to use a slightly different normalization. We define 
 \begin{definition}\label{def:lambdaP}
	For a polytope $P\sub \RR^n$ define 
	\[ \lambda(P) = \sup \{ \lambda : P=(1-\lambda) P+\lambda V(P)\}.\]
\end{definition}
\noindent Thus, 
$\lambda(P) = 1/ ({1+c(V(P))})$. The above observation \eqref{eq:carat} implies that for any polytope $P\sub \R^n$ one has $\lambda(P) \ge {1}/({n+1})$ (equiv. $c(A)\le n$). 
The only polytopes  for which $\lambda(P) = 1/(n+1)$ are simplices. This was proved by Schneider in \cite{Schneider-1678} see \cite{Sch_book}*{Theorem 3.1.9}, see Proposition \ref{prop:simplex iff lambda is minimal possible} for an alternative proof.

We call a polytope $P\sub\RR^n$   ``$\lambda$-\VG'' if $\lambda(P) \ge \lambda$, 
and we denote the class of $\lambda$-vertex generated polytopes by ${\rm VG}(\RR^n, \lambda)$.  Clearly ${\rm VG}(\RR^n, \lambda)$  is a decreasing family in $\lambda$.  It is easy to check that $\lambda(P) \le 1/2$ for any polytope $P$ by considering a $1$-dimensional face of $P$. 
We  let ${\rm VG}(\RR^n) = {\rm VG}(\RR^n, 1/2)$ be the smallest of these classes, and call the polytopes in this class ``\VG''. 
Our main objective in this note is to study the class 
  ${\rm VG}(\RR^n)$, namely polytopes for which 
  \[ P + V(P) = 2P. \]

The name ``$\lambda$-\VG'' stems from the fact that the equality in the   definition of $\lambda(P)$ can be iterated. As we show in Proposition \ref{prop:infinite-sum}, $P\in {\rm VG}(\RR^n,\lambda)$ if and only if
 \[ 
 P = \cls\left({\sum_{i=0}^\infty (1-\lambda)^{i} \lambda V(P)}\right),
 \]   
where $\cls(A)$ denotes the closure of the set $A$. 

This paper is devoted to studying some interesting features of these  classes. We present several of these in the introduction, and the text contains several other results and observations. 
Our first main result is that for any $\lambda$, the sum of a polytope in ${\rm VG}(\RR^n, \lambda)$ with a segment is also in ${\rm VG}(\RR^n, \lambda)$.

 \begin{theorem}\label{thm:adding-a-seg} Let $n\ge 1$ and $\lambda \in [1/(n+1), 1/2]$. Given $x,y\in \RR^n$ denote $\ell = [x,y] = \{ (1-\mu)x + \mu y: 0\le \mu \le 1\}\subset \RR^n$. For any $P\in {\rm VG}(\RR^n, \lambda)$ it holds that $P+\ell\in {\rm VG}(\RR^n, \lambda)$ as well.  
 \end{theorem}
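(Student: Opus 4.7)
My plan is to first establish a segment analogue of the VG identity. The hypothesis $\lambda \le 1/2$ ensures $\ell = (1-\lambda)\ell + \lambda\{x,y\}$: writing $e = x + t(y-x) \in \ell$ with $t \in [0,1]$, we have $[0,1-\lambda] \cup [\lambda,1] = [0,1]$, so either $e = (1-\lambda)\bigl(x + \tfrac{t}{1-\lambda}(y-x)\bigr) + \lambda x$ (if $t \le 1-\lambda$) or $e = (1-\lambda)\bigl(x + \tfrac{t-\lambda}{1-\lambda}(y-x)\bigr) + \lambda y$ (if $t \ge \lambda$) exhibits such a decomposition. Combining with the VG identity for $P$ via Minkowski distributivity gives
\[
    P+\ell \;=\; (1-\lambda)(P+\ell) \;+\; \lambda\bigl(V(P) + \{x,y\}\bigr).
\]
Since $V(P+\ell) \subseteq V(P) + \{x,y\}$, one inclusion $(1-\lambda)(P+\ell) + \lambda V(P+\ell) \subseteq P+\ell$ is automatic; the task is to show the reverse $P+\ell \subseteq (1-\lambda)(P+\ell) + \lambda V(P+\ell)$, i.e., that each ``extra'' point in $(V(P)+\{x,y\}) \setminus V(P+\ell)$ is redundant in the Minkowski sum.

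Writing $d := y - x$, I would classify the extras via tangent cones: since $v + e \in V(P+\ell)$ iff some functional $\phi$ is uniquely maximized on $P$ at $v$ and on $\ell$ at $e$, one obtains $v + y \in V(P+\ell) \iff d \notin T_v P$ and $v + x \in V(P+\ell) \iff -d \notin T_v P$. Since $T_v P$ is pointed at a vertex, at most one of $d,-d$ lies in it, so every $v \in V(P)$ contributes at least one vertex to $V(P+\ell)$; the extras are exactly the $v+y$ with $d \in T_v P$ (and the symmetric $v+x$ case). For such an extra $a = v+y$, setting $\tau^* := \max\{\tau \ge 0 : v + \tau d \in P\} > 0$, every vertex of the minimal face $F$ of $P$ containing $v + \tau^* d$ lies in $V_y$ (because the outer normal cone of $F$ contains a direction with strictly positive $d$-component). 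Writing $v + \tau^* d = \sum_i \nu_i v_i$ with $v_i \in V(F)$, this yields the explicit convex expansion
\[
    a \;=\; \tfrac{\tau^*}{1+\tau^*}(v+x) \;+\; \tfrac{1}{1+\tau^*}\sum_i \nu_i(v_i+y),
\]
in which every vertex appearing belongs to $V(P+\ell)$.

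The remaining step is to show that for every $q \in P+\ell$ there is a single $w$ from the support of this expansion such that $q' := q + \tfrac{\lambda}{1-\lambda}(a - w) \in P+\ell$, which yields $(1-\lambda)q + \lambda a = (1-\lambda)q' + \lambda w$ and completes the proof. My proposed dichotomy is to use $w = v+x$ whenever $q + \tfrac{\lambda}{1-\lambda}d \in P+\ell$, and otherwise to use some $w = v_i+y$. The main obstacle is the latter branch: when $q$ lies within $\tfrac{\lambda}{1-\lambda}$ of the $d$-maximum of $P+\ell$, one must exhibit an individual $v_i \in V(F)$ with $q + \tfrac{\lambda}{1-\lambda}(v - v_i) \in P+\ell$. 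The identity $\sum_i \nu_i\bigl[q + \tfrac{\lambda}{1-\lambda}(v - v_i)\bigr] = q - \tfrac{\lambda\tau^*}{1-\lambda}d$ places the weighted average safely inside $P+\ell$ in this regime, but extracting an individual good $v_i$ from the average is where the genuine work lies. I expect this to follow either from a careful facet-by-facet analysis of the boundary of $P+\ell$ near the $d$-top, or by appealing to the infinite-sum characterization of Proposition~\ref{prop:infinite-sum} and absorbing the ``bad'' contribution into a geometric tail (made possible by $\tfrac{\lambda}{1-\lambda} \le 1$).
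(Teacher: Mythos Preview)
Your proposal has a genuine gap in the place you yourself flag as ``where the genuine work lies'', and in fact the specific dichotomy you propose cannot be completed: your candidate set $\{v+x\}\cup\{v_i+y:v_i\in V(F)\}$ is in general too small. A concrete counterexample (with $\lambda=1/2$) is $P=[0,1]^3$, $x=0$, $y=d=(1,\tfrac12,0)$. Take $v=(0,0,0)$, so $a=v+y=(1,\tfrac12,0)$ is an extra; here $\tau^*=1$, $v+\tau^*d=(1,\tfrac12,0)$, and $F$ is the edge $[(1,0,0),(1,1,0)]$, so your candidates are $w\in\{(0,0,0),(2,\tfrac12,0),(2,\tfrac32,0)\}$. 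Now take $q=(\tfrac12,\tfrac54,0)\in P+\ell$. Then $q+(a-w)$ equals, respectively, $(\tfrac32,\tfrac74,0)$, $(-\tfrac12,\tfrac54,0)$, $(-\tfrac12,\tfrac14,0)$, and none of these lies in $P+\ell$ (the first has second coordinate exceeding the maximum $\tfrac32$ at $x_1=\tfrac32$; the latter two have $x_1<0$). So no $w$ from your list works, even though the average you compute does lie in $P+\ell$. The correct vertex here is $w=(0,1,0)+d=(1,\tfrac32,0)\in V(P+\ell)$, which is neither $v+x$ nor of the form $v_i+y$ with $v_i\in V(F)$. Neither a facet-by-facet refinement nor the infinite-sum characterization will rescue this particular candidate list; the obstruction is geometric, not analytic.

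The paper's argument takes a different and cleaner route that sidesteps the ``extra vertex'' bookkeeping entirely. Working with an origin-symmetric segment $\ell=[-c\theta,c\theta]$ and $P'=P+\ell$, it shows the set inclusion $P'\setminus\bigl((1-\lambda)P'+\lambda V(P')\bigr)\subseteq P\setminus\bigl((1-\lambda)P+\lambda V(P)\bigr)$. First, for any $v\in V(P)$ at least one of $v\pm c\theta$ lies in $V(P')$, and since $\tfrac{\lambda}{1-\lambda}\le 1$ one can absorb the corresponding shift into the $(1-\lambda)P'$ part; this gives $(1-\lambda)P+\lambda V(P)\subseteq(1-\lambda)P'+\lambda V(P')$. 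Second, for $z\in P'\setminus P$ one slides $z$ back along $-\theta$ to the first point $z'\in\partial P$, lying in the relative interior of some face $G$ of $P$. Because $P$ is $\lambda$-\VG, so is $G$ (Lemma~\ref{lem:fac}), hence $z'=(1-\lambda)y'+\lambda v'$ with $v'\in V(G)$, $y'\in G$; since $\theta$ has positive inner product with an outer normal of $G$, Lemma~\ref{lem:sec} gives $v'+c\theta\in V(P')$, and a direct rewriting places $z$ in $(1-\lambda)P'+\lambda V(P')$. The crucial difference from your approach is that the relevant face $G$ is determined by the \emph{target point} $z$, not by the extra vertex $a$, and the crucial lemma is that faces of $\lambda$-\VG\ polytopes are themselves $\lambda$-\VG.
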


As a consequence, we get that zonotopes are \VG. Recall that a zonotpe in $\RR^n$ is the Minkowski sum of a finite number of segments. The class of zonotopes is well studied, see \cite{Sch_book} and references therein. Indeed, Theorem \ref{thm:adding-a-seg} implies that all zonotopes are \VG, since a segment is clearly \VG.  
 \begin{corollary}\label{cor:zonotope}
      Let $n\ge 1$. 
 	Let $Z\subset \RR^n$ be a zonotope. Then $Z\in{\rm VG}(\RR^n)$.
 \end{corollary}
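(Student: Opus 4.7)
The plan is a straightforward induction on the number of segments in a Minkowski decomposition of the zonotope, using Theorem \ref{thm:adding-a-seg} as the engine for the inductive step. Write $Z = \ell_1 + \cdots + \ell_k$, where each $\ell_i \subset \RR^n$ is a (possibly degenerate) segment; this is the defining feature of a zonotope.

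For the base case, I would check that any segment $\ell = [x,y] \subset \RR^n$ belongs to ${\rm VG}(\RR^n)$. This is a direct verification: $V(\ell) = \{x,y\}$, and
\[
\ell + V(\ell) = \bigl([x,y]+x\bigr) \cup \bigl([x,y]+y\bigr) = [2x, x+y] \cup [x+y, 2y] = [2x,2y] = 2\ell,
\]
so $\lambda(\ell) \ge 1/2$. Combined with the general upper bound $\lambda(P) \le 1/2$ recorded in the introduction (obtained by looking at any one-dimensional face), this gives $\lambda(\ell) = 1/2$ and so $\ell \in {\rm VG}(\RR^n)$. Degenerate segments (single points) are trivially vertex generated.

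For the inductive step, assume $Z_{k-1} := \ell_1 + \cdots + \ell_{k-1} \in {\rm VG}(\RR^n) = {\rm VG}(\RR^n, 1/2)$. Since $1/2 \in [1/(n+1), 1/2]$ for every $n \ge 1$, Theorem \ref{thm:adding-a-seg} is applicable with $\lambda = 1/2$ and the segment $\ell_k$, yielding
\[
Z = Z_{k-1} + \ell_k \in {\rm VG}(\RR^n, 1/2) = {\rm VG}(\RR^n),
\]
which completes the induction.

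There is essentially no obstacle in this argument beyond invoking Theorem \ref{thm:adding-a-seg}; the corollary is a one-line consequence once that theorem is established. The only small point to verify independently is the base case that a single segment is vertex generated, which reduces to the elementary set computation above.
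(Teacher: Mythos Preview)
Your proof is correct and matches the paper's own argument: induction on the number of segments, with the base case that a single segment is \VG~and the inductive step supplied by Theorem~\ref{thm:adding-a-seg}. The paper also records a second proof (in Section~\ref{sec:central_sym}) via Lemma~\ref{lemma:csVG}, using induction on the dimension and the fact that every face of a zonotope is again a zonotope, but your approach is the primary one given in the text.
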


In the spirit of ``generalized zonoids'', see \cite{Sch_book}, we show that all polytopes are so-called ``generalized ${\rm VG}(\RR^n)$'', namely that any polytope can be summed with a zonotope and become \VG. 

\begin{theorem}\label{thm:zon}
Let $n\ge 1$, and let $P\subseteq \RR^n$ be a polytope. Then there exists a zonotope $Z\sub\RR^n$ such that $P+Z$ is \VG.
\end{theorem}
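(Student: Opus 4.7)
The plan is to combine Theorem \ref{thm:adding-a-seg} with an explicit construction. By Theorem \ref{thm:adding-a-seg}, the class of vertex generated polytopes is closed under Minkowski summation with segments and hence with arbitrary zonotopes; therefore it suffices to exhibit a single zonotope $Z_*$ for which $P + Z_*$ is vertex generated, since then the theorem holds with $Z = Z_*$ (and indeed with any zonotope containing $Z_*$ as a Minkowski summand).

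To construct $Z_*$, I would take a zonotope $Z_1$ whose normal fan refines the normal fan of $P$ --- for example $Z_1 = \sum_e [0,u_e]$ over the edges $e$ of $P$ with $u_e$ the direction of $e$. With this choice the normal fan of $P+Z_1$ coincides with that of $Z_1$, so the vertices of $P+Z_1$ are indexed by chambers $\sigma$ of this fan via $v(\sigma)=v_P(\sigma)+v_{Z_1}(\sigma)$. The vertex generation condition for $P+Z_1$ then becomes: for every $x=p+z\in P+Z_1$ there is a chamber $\sigma$ with
\[
 2x-v(\sigma)=\bigl(2p-v_P(\sigma)\bigr)+\bigl(2z-v_{Z_1}(\sigma)\bigr)\in P+Z_1.
\]
Corollary \ref{cor:zonotope} applied to $Z_1$ yields, for each $z$, some $\sigma$ with $2z-v_{Z_1}(\sigma)\in Z_1$; the remaining task is to arrange $2p-v_P(\sigma)\in P$, or else to absorb the residue into the zonotope summand of the decomposition $p'+z'$.

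If $Z_1$ alone is insufficient --- and one can already check for the triangle $\conv\{0,e_1,e_2\}$ that no scaling of its zonotope of edge directions works, since the centroid of the resulting hexagon fails the condition for every vertex --- the plan is to enlarge $Z$ by adding correcting segments. Natural candidates are segments in ``median-like'' directions: for each vertex $v_i$ of $P$, add a segment from $v_i$ toward the centroid of $P$, taken long enough that the residual error in the $P$-piece can be absorbed by the zonotope summand. Alternatively, one could proceed by induction on the dimension $n$: the inductive hypothesis provides, for each facet $F$ of $P$, a zonotope $Z_F$ in the affine hull of $F$ with $F+Z_F$ vertex generated, and combining these facet-level zonotopes (lifted to $\RR^n$ and augmented by segments transverse to each facet) should produce a global $Z$.

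The main obstacle I anticipate is the quantitative step: verifying that some \emph{finite} collection of such corrective segments actually suffices to guarantee the vertex generation condition for every point of $P+Z$, uniformly in $(p,z)\in P\times Z$. This requires a careful interplay between the combinatorics of the common normal fan and the compactness of $P$, and is where the technical core of the argument lies.
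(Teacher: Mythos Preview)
Your proposal is a plan rather than a proof, and you explicitly flag the gap yourself. The inductive alternative you mention in passing is in fact the route the paper takes, but the key finishing idea is missing from your sketch, and your suggested substitutes (``median-like'' segments, ``segments transverse to each facet'') do not supply it.

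Here is what you are missing. Suppose, by induction on dimension, you have found for each facet $F_i$ of $P$ a zonotope $Z_i\subset\aff(F_i)-F_i$ with $F_i+Z_i$ vertex generated, and you set $P'=P+\sum_i Z_i$. Using \eqref{eq:Fu_sum} and Theorem~\ref{thm:adding-a-seg}, every facet of $P'$ is vertex generated, so $\partial P'\subseteq\frac12(P'+V(P'))$. This is \emph{not} yet $P'\in{\rm VG}(\RR^n)$; the interior of $P'$ may still fail. The paper closes this gap as follows: pick a direction $\eta\in S^{n-1}$ generic enough that $F_\eta(P')=\{u_1\}$ and $F_{-\eta}(P')=\{u_2\}$ are vertices, and choose a zonotope $Z'\subset\eta^\perp$ so large that $P'\subseteq[u_1,u_2]+Z'$. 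Two observations then finish the argument. First, by the stronger form of Theorem~\ref{thm:adding-a-seg} (Theorem~\ref{thm:sum}), since all facets of $P'$ are vertex generated, the set $(P'+Z')\setminus\frac12(P'+Z'+V(P'+Z'))$ is contained in $P'\setminus\frac12(P'+V(P'))\subseteq P'$. Second, $[u_1,u_2]+Z'$ is a zonotope and hence vertex generated, and its vertices lie in $V(u_1+Z')\cup V(u_2+Z')\subseteq V(P'+Z')$ because $u_i+Z'=F_{\pm\eta}(P'+Z')$; therefore
\[
P'\subseteq[u_1,u_2]+Z'\subseteq\tfrac12\bigl([u_1,u_2]+Z'+V([u_1,u_2]+Z')\bigr)\subseteq\tfrac12\bigl(P'+Z'+V(P'+Z')\bigr),
\]
which forces the bad set to be empty.

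Your first approach via normal-fan refinement does not lead anywhere: as you observe, no scaling of the edge-direction zonotope fixes the triangle, and there is no mechanism in that framework to absorb the residual $2p-v_P(\sigma)\notin P$ into the zonotope summand in a controlled way. The paper's construction sidesteps this entirely by not trying to match up $P$-pieces and $Z$-pieces vertexwise; instead it swallows $P'$ whole inside an auxiliary zonotope whose vertices it can identify inside $V(P'+Z')$.
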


A natural question is whether any polytope can be approximated by \VG~polytopes. We verify this in the plane, and prove that in dimension $n=2$ \VG~polytopes are dense  with respect to the Hausdorff metric, in the class of all convex bodies. 
 \begin{theorem}\label{thm:denseinR2} Let $P\sub\RR^2$ be a compact convex set. Then there exists a sequence $(P_m)_{m=1}^{\infty} \subset {\rm VG}(\RR^2)$ with $P_m \to P$ in the Hausdorff metric. 
\end{theorem}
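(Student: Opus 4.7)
The plan is to reduce to the case of convex polygons and then to approximate any polygon $Q\sub\RR^2$ by a \VG~polygon $\tilde Q$ arbitrarily close to it in the Hausdorff metric, by adding many new vertices densely distributed on its boundary. Since any compact convex set $P$ can be approximated to within Hausdorff distance $1/(2m)$ by a convex polygon $Q$, it suffices to show that for every convex polygon $Q\sub\RR^2$ and every $\eta>0$ there is a \VG~polygon $\tilde Q$ with $d_H(Q, \tilde Q) < \eta$; chaining the two approximations then produces the required sequence $P_m \to P$.

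The construction of $\tilde Q$: for a small $\eta>0$ and a large integer $N$, replace each edge $e$ of $Q$ by a slightly-outward-bulging convex polyline made of $N$ short segments---subdivide $e$ into $N$ equal parts and push each subdivision point outward along the outer normal by an amount at most $\eta$, arranged so that the chain of perturbed points is convex (e.g., using the values of a fixed concave profile function along the edge). Take $\tilde Q$ to be the convex hull of the original vertices of $Q$ together with the perturbed subdivision points. Then $\tilde Q\supset Q$, $d_H(\tilde Q, Q)\le\eta$, and the vertex set $V(\tilde Q)$ can be made $\mu$-dense on $\partial\tilde Q$ for any prescribed $\mu>0$ once $N$ is sufficiently large.

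To verify $\tilde Q\in{\rm VG}(\RR^2)$, I would show that for every $p\in\tilde Q$ some vertex $v\in V(\tilde Q)$ satisfies $2p-v\in\tilde Q$. If $p$ lies on an edge $[v_i,v_{i+1}]$ of $\tilde Q$, writing $p = \alpha v_i + (1-\alpha)v_{i+1}$ and choosing $v = v_i$ when $\alpha\ge 1/2$ (respectively $v = v_{i+1}$) places $2p-v$ on the same edge. For $p$ in the interior of $\tilde Q$, one uses the classical observation (proved by rotating a chord through $p$ and applying the intermediate value theorem to the signed half-chord-length difference) that there is a chord of $\tilde Q$ through $p$ having $p$ as its midpoint; its endpoints $q_1,q_2\in\partial\tilde Q$ then satisfy $q_1+q_2=2p$. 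By density of $V(\tilde Q)$, one picks a vertex $v$ near $q_1$, in which case $2p-v$ lies near $q_2\in\partial\tilde Q$.

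The main obstacle is ensuring $2p-v\in\tilde Q$ rather than merely $\mu$-close to it. The correct viewpoint is that $(2p-\tilde Q)\cap\partial\tilde Q$ is a non-empty union of arcs of $\partial\tilde Q$, and one needs at least one arc to contain a vertex in its interior. For $p$ with $\mathrm{dist}(p,\partial\tilde Q)\ge \rho>0$, the arc lengths are bounded below by a positive continuous function of $\rho$, so for $\mu$ small enough the $\mu$-dense set $V(\tilde Q)$ enters some such arc in its interior, producing the required vertex. For $p$ within distance $\rho$ of $\partial\tilde Q$ the boundary argument applies at a close-by edge point of $\tilde Q$, and arranging a uniform handoff between these two regimes (by choosing $\rho$ and $\mu$ in the right order, depending on the geometry of $Q$) is the technical heart of the argument.
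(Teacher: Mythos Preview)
Your construction—bulge each edge outward by a small concave profile and populate it with many new vertices—is essentially the paper's, but your verification diverges, and the place you yourself flag as the ``technical heart'' is a genuine gap that the two-regime decomposition does not close.

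The midpoint-chord step is fine: for interior $p$ the IVT argument does produce a bisected chord, so the set $(2p-\tilde Q)\cap\partial\tilde Q$ is non-empty, and for $p$ at fixed distance $\rho$ from the boundary one can indeed bound its length below and then take $N$ large. The problem is the near-boundary regime. ``Apply the boundary argument at a close-by edge point'' does not work as written: if $p'\in e=[v_i,v_{i+1}]$ is the foot of $p$ and $v=v_i$ the closer endpoint, then $2p-v=(2p'-v)+2(p-p')$; although $2p'-v\in e$ and $p-p'$ is the inward normal to $e$, the displaced point may violate the constraints of \emph{other} facets of $\tilde Q$. Concretely, the radius in which ``reflect through the nearest vertex $v$'' succeeds is the Lemma~\ref{lem:triv} radius $\tfrac12\min\{d(v,\aff f):v\notin f\}$ computed in $\tilde Q$. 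For a bulge vertex this is of order (curvature of the bulge)$\times\mu^2$, hence $o(\mu)$ as $N\to\infty$; it never reaches up to the $\rho$ at which your arc-length bound becomes effective. No ordering of $\rho$ and $\mu$ repairs this, because both scales are tied to $N$ and the good regions do not overlap.

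The paper avoids the circularity by a different decomposition. It first applies Lemma~\ref{lem:triv} to the \emph{original} polygon $P$, whose vertices are well separated, obtaining a fixed $\delta>0$ with $P\cap(v+\delta B)\subset\tfrac12(P+v)$ for each $v\in V(P)$. The edges are then replaced by circular arcs satisfying a \emph{reflection condition} (the arc reflected through the edge midpoint lies in $P$), producing a strictly convex body $Q'$. The compact set $P'=P\setminus\bigcup_v\intr(v+\delta B)$ sits in $\intr Q'$; using that $Q'$ is not centrally symmetric, a finite subcover $\{\intr\tfrac12(u'_j+Q')\}$ of $P'$ is extracted, and the approximating polygon $Q$ is chosen with $V(Q)\supseteq V(P)\cup\{u'_j\}$ and close enough to $Q'$ that this cover persists. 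Points of $Q\setminus P$ are handled separately via the reflection condition. Thus the three regions—neighbourhoods of original vertices, the rest of $P$, and the bulges—are each treated by an argument whose parameters are fixed \emph{before} $N$ is chosen, and no uniform arc-length bound near the boundary of the final polygon is ever needed.
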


Theorem \ref{thm:adding-a-seg} implies that the Minkowski sum of a \VG~polytope and a zonotope remains \VG. 
It is not clear whether $ {\rm VG}(\RR^n, \lambda) $ is generally closed under Minkowski addition (nor whether it is closed under projections). However, we are able to show that this holds for any pair of centrally symmetric polytopes which are in a  ``generic''  position. To  
formulate the result, let $N_P(v)$ denote the cone of normal vectors of a polytope $P$ at a vertex $v$, namely 
 	\[ N_P(v)= \{y\in \RR^n: \langle{y},{x-v}\rangle\le 0,\,\,\, \forall x\in P\},\]
where $\iprod{\cdot}{\cdot}$ is the standard scalar product on $\RR^n$. 
A  ``generic pair" is now defined as follows. 
\begin{definition}\label{def:gen-pair}
 	Given two polytopes $P,Q\sub \RR^n$ with non-empty interior, we  say that they are a ``generic pair'' if, 
  given  $v\in V(P)$ and $w\in V(Q)$, it holds that  
  $$
N_P(v)\cap N_Q(w) \neq \emptyset\implies {\rm int}(N_P(v))\cap {\rm int}(N_Q(w)) \neq \emptyset.
  $$
 \end{definition}

 \begin{proposition}\label{prop:sum_of_generic_VG}  Let $n\ge 1$ and $\lambda \in [1/(n+1), 1/2]$. Suppose that $P,Q\in {\rm VG}(\RR^n, \lambda)$ are centrally symmetric that form a generic pair. Then    $P+Q\in {\rm VG}(\RR^n, \lambda)$ as well. 
	\end{proposition}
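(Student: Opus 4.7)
The plan is to establish the non-trivial inclusion $P+Q\subseteq(1-\lambda)(P+Q)+\lambda V(P+Q)$, since the reverse inclusion is immediate. Under the generic pair hypothesis, the vertex set of $P+Q$ is precisely $\{v+w:v\in V(P),\,w\in V(Q),\,N_P(v)\cap N_Q(w)\neq\emptyset\}$, while for any non-matching pair $(v,w)$ the point $v+w$ lies in $\intr(P+Q)$, because no supporting hyperplane of $P+Q$ can pick out such a $v+w$ as maximizer. Applying the $\lambda$-\VG~property of $P$ and $Q$ separately and distributing the Minkowski sum yields
\[
P+Q=((1-\lambda)P+\lambda V(P))+((1-\lambda)Q+\lambda V(Q))=(1-\lambda)(P+Q)+\lambda(V(P)+V(Q)),
\]
so the task reduces to showing that the non-matching contributions in $V(P)+V(Q)$ are redundant on the right-hand side.

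Given $x\in P+Q$, I would start from an arbitrary decomposition $x=a+b$ with $a\in P$, $b\in Q$ and apply the $\lambda$-\VG~property to each summand separately to write $x=(1-\lambda)(a'+b')+\lambda(v+w)$ with $v\in V(a)$, $w\in V(b)$. Central symmetry of $P$ gives the clean characterization $V(a)=\{v\in V(P):\lambda v\in a+(1-\lambda)P\}$; in particular, $V(a)$ enlarges as $a$ approaches the origin, and $V(0)=V(P)$, and analogously for $V(b)$. If $(v,w)$ is already matching, we are done; otherwise the plan is to exploit two layers of freedom---the shift $(a,b)\mapsto(a+t,b-t)$ for $t$ with $a+t\in P$ and $b-t\in Q$, and the choice of witness within $V(a)\times V(b)$---to arrange that the ``witness cones'' $\bigcup_{v\in V(a)}N_P(v)$ and $\bigcup_{w\in V(b)}N_Q(w)$ overlap on $S^{n-1}$, which is automatic near the origin. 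Any common direction $u$ in this overlap yields, through the generic pair condition, a matching pair $(v(u),w(u))\in V(a)\times V(b)$, producing the desired decomposition $x=(1-\lambda)(a'+b')+\lambda(v(u)+w(u))$ with vertex $v(u)+w(u)\in V(P+Q)$.

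The main obstacle is ensuring that this adjustment procedure succeeds uniformly for every $x\in P+Q$, most delicately when $x$ lies close to the boundary and the shifting freedom is severely constrained. For such $x$ the natural fallback is a direct support-function argument: pick $u$ in the outer normal cone of $P+Q$ at the smallest face containing $x$, let $z=v(u)+w(u)\in V(P+Q)$, and verify that $(x-\lambda z)/(1-\lambda)\in P+Q$ by checking the inequality $\langle u',x\rangle\le(1-\lambda)h_{P+Q}(u')+\lambda\langle u',z\rangle$ for every $u'\in S^{n-1}$. The bound $\lambda\le 1/2$ combined with the central symmetry of $P+Q$ controls the direction $u'=-u$ (both $h_{P+Q}(-u)=h_{P+Q}(u)$ and $\langle -u,z\rangle=-h_{P+Q}(u)$ enter cleanly), and the maximality $\langle u,z\rangle=h_{P+Q}(u)$ controls $u'$ near $u$; handling the intermediate directions is where the technical heart of the proof lies, and I would expect this step to require the generic pair hypothesis in a more essential way to rule out the adversarial configurations of $z$ relative to $x$.
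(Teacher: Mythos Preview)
Your proposal is incomplete by your own admission, and the gap is real: the ``technical heart'' you anticipate for intermediate directions $u'$ does not close by a support-function argument of the kind you sketch. The issue is that you are applying the $\lambda$-\VG\ property to $P$ and $Q$ globally, which produces a witness pair $(v,w)$ with no control over the normal cones $N_P(v)$, $N_Q(w)$; you then try to repair this by shifting $(a,b)$ or by directly verifying $(x-\lambda z)/(1-\lambda)\in P+Q$, and neither route has an obvious end.

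The paper's proof avoids this entirely through two reductions you did not use. First, since $P+Q$ is centrally symmetric, Lemma~\ref{lemma:csVG} says it is enough to check $\partial(P+Q)\subseteq(1-\lambda)(P+Q)+\lambda V(P+Q)$; the interior is then free. Second, and this is the key point, for $x\in\partial(P+Q)$ one writes $x\in F_u(P+Q)=F_u(P)+F_u(Q)$ for some $u$, decomposes $x=x_1+x_2$ with $x_i$ in the respective faces, and applies the $\lambda$-\VG\ property \emph{to the faces} $F_u(P)$ and $F_u(Q)$ (which are $\lambda$-\VG\ by Lemma~\ref{lem:fac}), not to $P$ and $Q$. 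This yields $x_i=(1-\lambda)y_i+\lambda v_i$ with $v_i\in V(F_u(\cdot))$, so automatically $u\in N_P(v_1)\cap N_Q(v_2)$. The generic pair hypothesis now gives $v_1+v_2\in V(P+Q)$ in one line, and $y_1+y_2\in F_u(P)+F_u(Q)\subseteq P+Q$ is immediate---no support-function verification over all $u'$ is needed. Your fallback paragraph was circling this idea (you correctly pick $u$ from the outer normal cone) but you never exploit that the face decomposition already lands $y_1+y_2$ inside $P+Q$.
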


The paper is organized as follows. In Section \ref{sec:obst} we show that a $\lambda$-\VG~polytope must have exponentially  many vertices and that faces of $\lambda$-\VG~are  also $\lambda$-\VG.  In Section \ref{sec:zon}, we prove Theorem \ref{thm:adding-a-seg} in a slightly more general form.  In Section  \ref{sec:dense} we discuss various denseness notions, we prove Theorem \ref{thm:denseinR2},  
we show that, with respect to a weaker metric, $\lambda$-\VG~polytopes are closed and we prove Theorem \ref{thm:zon}. Section \ref{sec:central_sym} is devoted to centrally symmetric polytopes. Along with several other observations, we show that a centrally symmetric polytope is $\lambda$-\VG~ if and only if all of its faces are, and we prove Proposition \ref{prop:sum_of_generic_VG}. We include a curious fact regarding other linear variants of \VG~polytopes, demonstrating that $P-V(P) = P-P$ can occur only when $P$ is centrally symmetric and \VG. Section \ref{series} is devoted to the series expansion and covering properties of $\lambda$-\VG~polytopes. Section \ref{sec:other_stuff} includes some additional remarks and connected results, as well as a proof of the fact that the simplex is the unique minimizer of $\lambda(P)$. 

\subsubsection*{Acknowledgement}
 {The authors were partially supported by ISF grant number 784/20. The first and second named authors were also partially supported by European Research Council (ERC) under the European Union's Horizon 2020 research and innovation programme (grant agreement No. 770127).}

\section{Some simple obstacles}\label{sec:obst}

We will demonstrate in this paper that the class ${\rm VG}(\RR^n, \lambda)$, for any $\lambda$, is quite rich. In the next section we will see that it
includes all zonotopes, which follows from the stronger claim which is that if  a polytope is $\lambda$-\VG, then it remains $\lambda$-\VG~also when  adding a segment  to it. In other words, the class of $\lambda$-\VG~polytopes is closed under the operation of Minkowski summation with a segment (and, by induction, under adding any zonotope). 

It is clear, however, 
that not every polytope is \VG (for example, a triangle). In fact, in any dimension $n$, the simplex is not a member of ${\rm VG}(\RR^n, \lambda)$ for any $\lambda >1/(n+1)$, as we show in Proposition \ref{prop:simplex iff lambda is minimal possible}. We also show there that any polytope which is not a simplex, does belong to some ${\rm VG}(\RR^n, \lambda)$  with $\lambda >1/(n+1)$.
%
We begin with some simple obstacles to belonging to the class $  {\rm VG}(\RR^n, \lambda)$.

\begin{lemma}\label{lem:vert}
Let $n\ge 1$ and $\lambda \in [1/(n+1), 1/2]$. If $P\in {\rm VG}(\RR^n, \lambda)$ has non-empty interior then it has at least $1/(1-\lambda)^n$ vertices. Moreover, if $P\in {\rm VG}(\RR^n)$ has exactly $2^n$ vertices, then it must be a linear image of the cube.
\end{lemma}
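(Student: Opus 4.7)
My plan is to deduce both parts from a single volume computation. For Part (1), writing the $\lambda$-\VG\ identity as the covering
\[
P = \bigcup_{v\in V(P)}\bigl((1-\lambda)P + \lambda v\bigr),
\]
each set on the right is a translate of $(1-\lambda)P$ of volume $(1-\lambda)^n\vol(P)$, so summing gives $\vol(P)\leq|V(P)|(1-\lambda)^n\vol(P)$, hence $|V(P)|\geq (1-\lambda)^{-n}$ since $\vol(P)>0$.

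For Part (2), apply this with $\lambda=1/2$: the equality $|V(P)|=2^n$ saturates the volume count, so the $2^n$ tiles $T_v := (1/2)P+(1/2)v$, $v\in V(P)$, partition $P$ up to measure zero. I will induct on $n$. The case $n=1$ is immediate. For $n=2$ I argue directly: at the midpoint of any edge $[v,v']$ of $P$, the tangent cone of $T_v$ equals that of $P$ at $v'$ and vice versa, and the requirement that these two planar cones fit together to exactly tile the tangent half-plane of $P$ at the midpoint forces the edges of the quadrilateral to come in parallel pairs with matching orientations, so $P$ is a parallelogram.

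For the inductive step with $n\geq 3$, fix a facet $F$ of $P$ with outer normal $u$. A short support-function computation shows $T_v\cap F=\emptyset$ unless $v\in V(F):=V(P)\cap F$, in which case $T_v\cap F=(1/2)F+(1/2)v$; hence $F=\bigcup_{v\in V(F)}\bigl((1/2)F+(1/2)v\bigr)$, so $F$ is itself $1/2$-vertex generated. The main step is that these $(n{-}1)$-dimensional pieces are interior-disjoint inside $F$: were two of them to overlap in $(n{-}1)$-measure, the tiles $T_v$ and $T_{v'}$ would share an $(n{-}1)$-dimensional subset of the supporting hyperplane $H_F$ of $F$, and since both tiles lie on the $P$-side of $H_F$ this would force an $n$-dimensional overlap in a neighborhood of $H_F$, contradicting interior-disjointness of the partition. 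Consequently $|V(F)|=2^{n-1}$, and by the inductive hypothesis $F$ is a linear image of the $(n{-}1)$-cube.

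Once every facet of $P$ is a parallelepiped, every $2$-face of $P$ lies in some facet and is therefore a parallelogram, in particular centrally symmetric. By the classical Alexandrov--McMullen characterization of zonotopes as those polytopes all of whose $2$-faces are centrally symmetric, $P$ is a zonotope; and a full-dimensional zonotope in $\RR^n$ with exactly $2^n$ vertices must be generated by $n$ linearly independent segments, hence is a parallelepiped, i.e.\ a linear image of the cube. The main obstacle in this scheme is the interior-disjointness argument for the restricted tiling on a facet, which is precisely where the saturation of the vertex-count bound imposes genuine rigidity.
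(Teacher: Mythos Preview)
Your volume argument for Part~(1) is exactly the paper's. For Part~(2), however, the paper takes a much shorter path: from the saturated volume count it notes that for any two vertices $v,w$ the translates $\tfrac12(P+v)$ and $\tfrac12(P+w)$ have nonempty (it contains $(v+w)/2$) but measure-zero intersection, hence can be separated by a hyperplane; the normal $u$ of this hyperplane is then an outer normal of $P$ at one vertex and $-u$ is an outer normal at the other. Thus $V(P)$ is an antipodal set of cardinality $2^n$, and the Danzer--Gr\"unbaum theorem immediately forces $P$ to be an affine cube.

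Your inductive route is correct but genuinely different. The key step---interior-disjointness of the restricted pieces $\tfrac12 F+\tfrac12 v$ on a facet $F$---is justified exactly as you say: a positive $(n{-}1)$-measure overlap would place some $x_0$ in the relative interior of the corresponding facet of both $T_v$ and $T_{v'}$, and since each tile is locally the half-space bounded by $H_F$ near such a point, an $n$-dimensional overlap follows. This gives $|V(F)|=2^{n-1}$, the induction runs, and the $2$-face criterion for zonotopes applies. The one step you assert without argument---that a full-dimensional zonotope with exactly $2^n$ vertices is a parallelepiped---does need a line: writing $Z=\sum_{i=1}^m[-a_i,a_i]$ with pairwise non-parallel $a_i$, the vertices of $Z$ are in bijection with the chambers of the central arrangement $\{a_i^\perp\}$, and any essential arrangement of $m>n$ distinct hyperplanes has strictly more than $2^n$ chambers (start from $n$ independent ones and note each further hyperplane splits at least one chamber). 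Your approach trades one black box (Danzer--Gr\"unbaum) for two others (the $2$-face characterization plus the chamber count) and is longer, but it has the merit of exposing along the way that every facet of an extremal $\tfrac12$-VG polytope is itself extremal one dimension down.
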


\begin{proof}
Indeed, by assumption $P\sub \bigcup_{v\in V(P)} (\lambda v+(1-\lambda)P)$ and so comparing volumes  $$\vol(P) \le |V(P)|\vol((1-\lambda)P) = |V(P)|(1-\lambda)^n \vol(P).$$
This completes the proof of the inequality. 
Next, assume that $P\in {\rm VG}(\RR^n)$ has exactly $2^n$ vertices. This means that each of the intersections $(\frac{v+P}{2})\cap(\frac{w+P}{2})$ is non-empty (includes $(v+w)/2$) and has measure zero. Therefore, considering a separating hyperplane, there exists some unit vector $u\in \RR^n$ such that $u$ is an outer normal to $P$ at $v$ and $-u$ is an outer normal of $P$ at $w$. A set on $\partial P$ with this property (that for any two of its elements one can find such a $u$) is called an ``antipodal set''. In other words, the vertices of $P$ form an antipodal set of cardinality $2^n$. By a result of Danzer and Gr\"unbaum \cite{DG62}, this happens only if $P$ is a linear image of the cube. This completes the proof. 
\end{proof}
\noindent We remark that by the same argument,   $P\in {\rm VG}(\RR^n, \lambda)$ with non-empty interior   has strictly more than $1/(1-\lambda)^n$ vertices. 

Next, we claim that faces of $\lambda$-\VG~polytopes are also $\lambda$-\VG. 
In particular, any polytope with a triangular $2$-dimensional face is not \VG. 
We prove a formally stronger claim, namely that if a face of $P$ is included in $(1-\lambda) P+ \lambda V(P)$, then the face is $\lambda$-\VG 
 (even without requiring the whole polytope to be in ${\rm VG}(\RR^n, \lambda)$). 
\begin{lemma}\label{lem:fac}
  Let $n\ge 1$ and $\lambda \in [1/(n+1), 1/2]$.  Let $P\sub \RR^n$ be a  polytope and let $F$ be a face of $P$. Assume $F \subseteq (1-\lambda) P+ \lambda V(P)$. Then $F = (1-\lambda) F+ \lambda V(F)$.   In particular, faces of $\lambda$-\VG~polytopes are $\lambda$-\VG.
\end{lemma}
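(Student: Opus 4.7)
The strategy is to exploit the defining supporting-hyperplane property of a face. Since $F$ is a face of $P$, there exists a linear functional $\ell:\RR^n\to\RR$ and a constant $c\in\RR$ with $\ell(y)\le c$ for every $y\in P$ and $F=\{y\in P:\ell(y)=c\}$. I would like to use this to show that whenever a point of $F$ is written as a convex combination of the form $(1-\lambda)p+\lambda v$ with $p\in P$ and $v\in V(P)$, then \emph{both} $p$ and $v$ already lie in $F$.

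The main step is as follows. Fix $x\in F$. By the hypothesis $F\subseteq (1-\lambda)P+\lambda V(P)$ we can write $x=(1-\lambda)p+\lambda v$ for some $p\in P$, $v\in V(P)$. Applying $\ell$ and using $\ell(x)=c$ gives
\[
c = (1-\lambda)\ell(p)+\lambda\ell(v),
\]
with $\ell(p),\ell(v)\le c$ and weights $(1-\lambda),\lambda\in(0,1)$ strictly positive (here I use $\lambda\in[1/(n+1),1/2]$). It follows that $\ell(p)=\ell(v)=c$, so $p\in F$ and $v\in V(P)\cap F$. I would then invoke the standard fact that a vertex of $P$ lying in a face $F$ is a vertex of $F$, i.e.\ $V(P)\cap F\subseteq V(F)$; this shows $v\in V(F)$ and hence $x\in (1-\lambda)F+\lambda V(F)$. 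Thus $F\subseteq (1-\lambda)F+\lambda V(F)$.

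For the reverse inclusion, since $V(F)\subseteq F$ and $F$ is convex, any convex combination $(1-\lambda)y+\lambda w$ with $y\in F$, $w\in V(F)$ lies in $F$, giving $(1-\lambda)F+\lambda V(F)\subseteq F$. Combined with the previous inclusion, we obtain the desired equality, proving that $F$ itself is $\lambda$-vertex generated. The ``in particular'' statement is then immediate: if $P\in {\rm VG}(\RR^n,\lambda)$ then $F\subseteq P=(1-\lambda)P+\lambda V(P)$, and the hypothesis of the lemma holds for every face.

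The only step requiring a small amount of care is the identification $V(P)\cap F\subseteq V(F)$, since the argument collapses if $v\in V(P)$ fails to be an extreme point of $F$; but this is a routine consequence of $F$ being a face (any representation of $v$ as a convex combination of points of $F$ is in particular a convex combination of points of $P$, so extremality in $P$ forces extremality in $F$). Everything else is linear bookkeeping, so I do not anticipate any real obstacle.
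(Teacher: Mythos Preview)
Your proof is correct and follows essentially the same route as the paper: write $x=(1-\lambda)p+\lambda v$, use the face property to force $p,v\in F$, and then invoke $V(P)\cap F\subseteq V(F)$. The only difference is cosmetic---you spell out the ``both lie in $F$'' step via a supporting hyperplane, whereas the paper simply appeals to the definition of a face (if a relative interior point of a segment in $P$ lies in $F$, so do its endpoints); for polytopes these are equivalent.
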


\begin{proof}[Proof of Lemma \ref{lem:fac}]
    Let $x \in F$. By our assumption  there exist $v \in V(P),\, y \in P$ such that $x = (1-\lambda) y+ \lambda v$. Clearly both $y$ and $v$ belong to $F$. 
    A vertex of $P$ which belongs to $F$ is also a vertex of $F$, and the proof is complete.  
\end{proof}

\begin{remark} 
We  note that ${\rm VG}(\R^n, \lambda)$ is not closed under intersection with either other $\lambda$-\VG~polytopes or a subspace, because we can translate and rotate two cubes so that their intersection is a simplex, and as all centrally symmetric polytopes are  sections of higher-dimensional hyper-cubes. 
\end{remark}

\section{Zonotopes, and Minkowski addition of a segment} \label{sec:zon}

In this section we will prove that the sum of a $\lambda$-\VG~polytope and a segment is $\lambda$-\VG. In fact, we show something slightly more general, namely that for a polytope $P$ all of whose faces are $\lambda$-\VG,
the difference between the polytope and the average  $(1-\lambda)P+\lambda V(P)$ only decreases when passing to $P+\ell$ for any line segment $\ell$. More precisely we prove the following.  

\begin{theorem}\label{thm:sum}
Let $n\ge 1$ and $\lambda \in [1/(n+1), 1/2]$.
Let $P\sub \R^n$ be a polytope such that all of its $(n-1)$-dimensional faces are $\lambda$-\VG. Let $\ell\sub \RR^n$ be an origin symmetric line segment and let 
$P' = P + \ell$.  Then 
\begin{equation}
	P'\setminus\left( (1-\lambda)P'+\lambda V(P')\right) \subseteq P\setminus\left( (1-\lambda)P+\lambda V(P)\right). 
\end{equation}
 In particular, if  $P$ is $\lambda$-\VG~(so that the right hand side is empty) then so is $P+\ell$ for any segment $\ell$.
\end{theorem}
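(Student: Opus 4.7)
I would argue the contrapositive: for $x\in P'$, whenever $x\in(1-\lambda)P+\lambda V(P)$ or $x\notin P$, I will show that $x\in(1-\lambda)P'+\lambda V(P')$. Write $\ell=[-e,e]$ with $e\neq 0$ (otherwise $P'=P$ and the conclusion is immediate). The technical input I rely on throughout is the elementary observation that for every $v\in V(P)$, both $v+e$ and $v-e$ belong to $V(P')$; this is a routine normal-cone calculation using $N_{P'}(v\pm e)=N_P(v)\cap\{u:\pm\iprod{u}{e}\geq 0\}$ and the full-dimensionality of $N_P(v)$ (working inside $\aff P$ if $P$ is degenerate).

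For the first clause, suppose $x=(1-\lambda)p+\lambda v$ with $p\in P$ and $v\in V(P)$. Using $v+e\in V(P')$, one rewrites
\[
x=(1-\lambda)\bigl(p-\tfrac{\lambda}{1-\lambda}e\bigr)+\lambda(v+e),
\]
and since $\lambda\leq 1/2$ forces $\lambda/(1-\lambda)\leq 1$, the first summand lies in $P+\ell=P'$. Hence $x\in(1-\lambda)P'+\lambda V(P')$.

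For the second clause, assume $x\in P'\setminus P$. Using the symmetry $\ell=-\ell$, I may suppose $x=y+\mu e$ with $y\in P$ and $\mu:=\inf\{t:x-te\in P\}>0$; automatically $\mu\leq 1$. The choice of $\mu$ forces $y$ to lie on a face $F$ of $P$ whose outer normal $u$ satisfies $\iprod{u}{e}>0$. Let $F_0\subseteq F$ denote the smallest face of $P$ containing $y$; since $F_0$ is a proper face of $P$, it is contained in some facet, and the hypothesis on $(n-1)$-dimensional faces combined with Lemma~\ref{lem:fac} makes $F_0$ itself $\lambda$-\VG. Writing $y=(1-\lambda)q+\lambda v$ with $q\in F_0\subseteq P$ and $v\in V(F_0)\subseteq V(F)$, one has $u\in N_P(v)$, and therefore $v+e\in V(P')$. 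Then
\[
x=(1-\lambda)\Bigl(q+\tfrac{\mu-\lambda}{1-\lambda}e\Bigr)+\lambda(v+e),
\]
where $(\mu-\lambda)/(1-\lambda)\in[-1,1]$ because $0<\mu\leq 1$ and $\lambda\leq 1/2$, so the first summand lies in $P+\ell=P'$. Thus $x\in(1-\lambda)P'+\lambda V(P')$.

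The main obstacle I anticipate is the second clause: identifying the face $F$ and the subface $F_0$ carrying $y$, verifying that the associated vertex of $P$ lifts to a vertex of $P'$ via the matching sign of $\iprod{u}{e}$, and finally calibrating the coefficient $(\mu-\lambda)/(1-\lambda)$ to remain in $[-1,1]$. This last calibration is precisely where the assumption $\lambda\leq 1/2$ is used; with a larger $\lambda$ the residual shift could escape $\ell$ and the argument would break down.
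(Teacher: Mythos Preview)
Your argument is essentially the paper's proof: both first show $(1-\lambda)P+\lambda V(P)\subseteq(1-\lambda)P'+\lambda V(P')$ by shifting $v\mapsto v\pm e$, and both handle $x\in P'\setminus P$ by sliding $x$ back to $y\in\partial P$ along the direction $e$, decomposing $y$ inside the minimal face carrying it, and lifting the resulting vertex using a normal $u$ with $\iprod{u}{e}>0$. The calibrations $\lambda/(1-\lambda)\le 1$ and $(\mu-\lambda)/(1-\lambda)\in[-1,1]$ are identical to the paper's.

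One correction, however: your ``elementary observation'' that \emph{both} $v+e$ and $v-e$ are vertices of $P'$ for every $v\in V(P)$ is false. Take $P=[0,1]^2$ and $e=e_1$; then $v=(0,0)$ has $v+e=(1,0)\notin V(P')$. The full-dimensionality of $N_P(v)$ only guarantees that $N_P(v)$ meets at least one of the open half-spaces $\{\pm\iprod{u}{e}>0\}$, so at least one of $v\pm e$ is a vertex of $P'$. Your second clause already selects the correct sign via the normal $u$ with $\iprod{u}{e}>0$, but in the first clause you must choose the sign accordingly (the paper does this by ``interchanging $\theta$ and $-\theta$''); the rewriting then becomes $x=(1-\lambda)\bigl(p\mp\tfrac{\lambda}{1-\lambda}e\bigr)+\lambda(v\pm e)$, which works equally well.
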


To prove Theorem \ref{thm:sum} we  need the following simple lemma.
\begin{lemma}\label{lem:sec}
  Let $n\ge 1$, $P\sub \RR^n$   a polytope,  $F$   a face of $P$ and  $\theta \in S^{n-1}$ satisfy that   $P \cap (\epsl  \theta + F) = \emptyset$ for any $\epsl>0$. Then for any $c>0$,  $F+c\theta$ is a face of $P+[-c\theta,c\theta]$.
\end{lemma}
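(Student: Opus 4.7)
The strategy is to exhibit a linear functional $u\in\RR^n$ which exposes $F+c\theta$ as a face of $P+[-c\theta,c\theta]$. For any two convex polytopes $A,B$, the exposed face $\arg\max_{A+B}\iprod{u}{\cdot}$ equals the Minkowski sum of the corresponding exposed faces $\arg\max_A\iprod{u}{\cdot}$ and $\arg\max_B\iprod{u}{\cdot}$. Hence it suffices to find a single $u$ for which (i) the face of $P$ exposed by $u$ is precisely $F$, and (ii) the face of $[-c\theta,c\theta]$ exposed by $u$ is precisely $\{c\theta\}$. Condition (ii) amounts to $\iprod{u}{\theta}>0$. For condition (i), letting $N_P(F)=\{y\in\RR^n:\iprod{y}{x-f}\le 0\text{ for all }x\in P,\,f\in F\}$ denote the normal cone of $P$ at $F$ (the natural extension of the paper's definition for vertices), the required condition is $u\in\relint N_P(F)$, since for $u$ on the relative boundary the exposed face is a strictly larger face containing $F$.

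My task thus reduces to producing some $u\in\relint N_P(F)$ with $\iprod{u}{\theta}>0$. Pick any $f_0\in\relint F$. The hypothesis $P\cap(\epsl\theta+F)=\emptyset$ for every $\epsl>0$ implies in particular that $f_0+\epsl\theta\notin P$ for every $\epsl>0$. This is exactly the statement that $\theta$ does not lie in the tangent cone $T_P(f_0)=\{\lambda(x-f_0):\lambda\ge 0,\,x\in P\}$, because any representation $\theta=\lambda(x-f_0)$ with $\lambda>0$ would force $f_0+\theta/\lambda=x\in P$. Since $f_0\in\relint F$, the tangent cone $T_P(f_0)$ coincides with the cone polar to the normal cone $N_P(F)$. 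Therefore $\theta\notin N_P(F)^\circ$, which produces some $u_0\in N_P(F)$ with $\iprod{u_0}{\theta}>0$.

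To finish, upgrade $u_0$ into the relative interior. Fix any $u_1\in\relint N_P(F)$ (this is nonempty because the hypothesis forces $F$ to be a proper face) and set $u_t=(1-t)u_0+tu_1$. The standard line-segment principle for relative interiors gives $u_t\in\relint N_P(F)$ for every $t\in(0,1]$, and by continuity $\iprod{u_t}{\theta}>0$ for all sufficiently small $t>0$. Any such $u_t$ simultaneously satisfies (i) and (ii), so $F+c\theta$ is the face of $P+[-c\theta,c\theta]$ exposed by $u_t$.

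The main subtleties to handle carefully are the two standard facts from polyhedral geometry invoked above: that $T_P(f_0)$ is polar to $N_P(F)$ whenever $f_0\in\relint F$, and that faces of a Minkowski sum decompose as sums of faces of the summands under a common normal direction. Both are routine, so I expect no genuine obstacle; the argument is essentially an exercise in identifying the correct supporting hyperplane.
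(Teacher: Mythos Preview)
Your proposal is correct and follows essentially the same approach as the paper: find a direction $u$ with $F=F_u(P)$ and $\iprod{u}{\theta}>0$, then apply the identity $F_u(P+Q)=F_u(P)+F_u(Q)$. The paper simply asserts that the hypothesis is equivalent to the existence of such a $u$, whereas you supply the details via the polarity $T_P(f_0)=N_P(F)^\circ$ and the line-segment upgrade into $\relint N_P(F)$; this extra care is justified, since $F_u(P)=F$ indeed requires $u\in\relint N_P(F)$, a point the paper leaves implicit in its parenthetical ``(that is, $F=F_u(P)$)''.
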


\begin{proof}
	We will use the following notation: for a polytope $P$ and a unit vector $u$, we let $F_u$ denote the face of $P$ in direction $u$, namely 
	\[ F_u(P) = \{ x\in P: \langle u, x \rangle = \sup_{y\in P}\langle u, y \rangle  =  h_P(u)\}.\]
It is well known (see \cite{Sch_book}*{Theorem 1.7.5}) that Minkowski addition respects this definition, namely that for $u \in S^{n-1}$ and polytopes $P,Q \subseteq \R^n$ one has 
\begin{equation}\label{eq:Fu_sum}
	F_u(P+Q)=F_u(P)+F_u(Q).
\end{equation}
The condition that  $P \cap (\epsl \theta + F) = \emptyset$ for any $\epsl>0$ is equivalent to the existence of some $u \in S^{n-1}$  which belongs to the normal cone of  $F$ (that is, $F=F_u(P)$) and such that $\langle u, \theta \rangle > 0$.
 Thus
 \[F_u(P+[-c\theta,c\theta])=F_u(P)+F_u([-c\theta,c\theta])=F+c\theta.\]
 This completes the proof of the lemma. 
\end{proof}

\begin{proof}[Proof of Theorem \ref{thm:sum}]
Given $\ell$ denote  $\ell=[-c\theta,c\theta]$ where $\theta \in S^{n-1}$ and $c>0$. We start by showing that 
$(1-\lambda)P+\lambda V(P) \subseteq (1-\lambda)P'+\lambda V(P') $. Let $x \in (1-\lambda)P+\lambda V(P)$, and choose $v \in V(P)$, $y\in P$ such that $(1-\lambda)y+\lambda v= x$. 
Since $\{v\}$ is a $0$-dimensional face of $P$, either $(v+\RR^+ \theta) \cap P = \emptyset$ or $(v-\RR^+\theta) \cap P = \emptyset$ (or both). By interchanging $\theta$ and $-\theta$ we may assume without loss of generality the former, and by 
  Lemma \ref{lem:sec} we get that $v+c\theta \in V(P+\ell)= V(P')$. Using that $y-\frac{\lambda }{1-\lambda}c\theta \in P+\ell = P'$, we get 
    \[x = (1-\lambda)y+\lambda v = (1-\lambda) (y-\frac{\lambda }{1-\lambda} c\theta)+\lambda (v+c\theta) \in   (1-\lambda)P'+\lambda V(P') .\]
Next, let $x\in P'\setminus  ((1-\lambda)P'+\lambda V(P') )$. We have just demonstrated that $x\not\in (1-\lambda)P +\lambda V(P )$, and are left with showing that $x\in P$. Assume towards a contradiction that $x\not\in P$. Since $x\in P+\ell$ it follows that  $P \cap (x+[-c\theta,c\theta]) \ne \emptyset$, and (again, switching to $-\theta$ if needed) we may assume  $P \cap (x+[-c\theta,0)) \ne \emptyset$. We let 
  $c':=\inf\{r>0:x-r\theta \in P\}$ and $x'=x-c'\theta$. In other words, $x'$ is one of the two intersection points of $x+\RR\theta$ and $\partial {P}$, the one closer to $x$. 
  Denote by $F$ the minimal face of $P$ containing $x'$ so that $x'\in \relint(F)$. By our assumption, $F = (1-\lambda)F + \lambda V(F)$  so that there exist $v\in V(F),\, y\in F$ such that $ (1-\lambda)y + \lambda v =x'$. 
    Since $x'\in \relint(F)$ and $c'$ was minimal, the direction $\theta$ must satisfy that  there is some $u\in S^{n-1}$ in the normal cone of $F$ with $\iprod{u}{\theta}>0$ and thus  $(F+\epsl\theta )\cap P = \emptyset$ for any $\epsl>0$.  We  use  Lemma \ref{lem:sec} to conclude that $v+c\theta \in V(P+\ell)$, and since $0< c' \le c$ we have that 
    $ \frac{c'-\lambda c}{1-\lambda }\in [-c,c]$ and so
    $y+\frac{c'-\lambda c}{1-\lambda }
  \theta\in P+\ell$. Therefore
    \[x = x'+c'\theta= 
    (1-\lambda)( y + \frac{c'-\lambda c}{1-\lambda }
  \theta)  + \lambda (v + c\theta) \in  (1-\lambda)P'+ \lambda V(P').\]
This contradicts our assumption on $x$, and the proof is complete.
\end{proof}

Using that a segment is \VG, and Theorem \ref{thm:sum} which implies that when we add a segment to a \VG~polytope it remains \VG, we get that any zonotope in any dimension is \VG, proving Corollary \ref{cor:zonotope}. 
\noindent We will see another simple proof that zonotopes are \VG~in  Section \ref{sec:anpotherproofforzn}.

\section{On various denseness notions for  \texorpdfstring{${\rm VG}(\RR^n)$}{VG(Rn)} and  \texorpdfstring{${\rm VG}(\RR^n,\lambda)$}{VG(Rn)lambda}} \label{sec:dense}

It is natural to ask whether \VG~polytopes are dense within the class of polytopes, or, similarly, within the class of all convex bodies (when considering the Hausdorff metric, this is the same question). We are able to prove this in dimension $n=2$, which is the subject of Section \ref{sec:dim2dense}. 
In Section \ref{sec:newmetric} we define a different metric on polytopes, $d_F$, which is weaker than the Hausdorff metric $d_F(P, Q) \ge d_H(P,Q)$, 
and such that with respect to this metric the \VG~polytopes form a closed set. 
The metric is quite natural and is given by the Hausdorff distance between the vertex-sets of the polytopes. 
We consider other similar questions, such as whether one can always add a zonotope to a polytope so that the resulting polytope is \VG~(we show this is correct, see Theorem \ref{thm:zon}). In the same vein, one can try to add a very {\em small} polytope such that the resulting  polytope is \VG, which, if true, would imply {\bf{\bs{density}}} with respect to Hausdorff distance. However, this we show cannot hold, and if a polytope $P\sub \RR^n$ is not \VG~then there is some $\epsl>0$ such that for all polytopes  $Q\sub \epsl B_2^n$ the polytope $P+Q$ is not \VG. These two results are given in Section \ref{sec:minkadd}.

\subsection{Vertex generated polytopes in the plane}\label{sec:dim2dense}

In this section, we focus on  planar vertex generated polytopes, showing that they are dense in the class of all planar convex bodies:

\begin{theorem}\label{thm:dens}
	The class ${\rm VG}(\RR^2)$ of  \VG~polytopes in the plane  is  dense, with respect to the Hausdorff distance, in the class of all planar convex bodies.
\end{theorem}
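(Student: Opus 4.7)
The plan is to first reduce to polygonal approximation: since every planar convex body is a Hausdorff limit of convex polygons, it suffices to show that for any convex polygon $P \subset \RR^2$ and any $\epsilon>0$ there exists a \VG~polygon within Hausdorff distance $\epsilon$ of $P$.

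A natural first attempt would be to set $P' = P + \epsilon Z$ for a small zonotope $Z$, combining Corollary~\ref{cor:zonotope} (zonotopes are \VG) and Theorem~\ref{thm:adding-a-seg} (adding a segment preserves \VG). However, this will fail: as will be shown in Section~\ref{sec:minkadd}, if $P$ is not \VG~then $P+Q$ is not \VG~for any sufficiently small polytope $Q$. Hence the approximants must be constructed in a way that genuinely differs from $P$, not merely by a small Minkowski perturbation.

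My proposal is to enrich the vertex set of $P$. Let $c$ denote the centroid of $P$ and $v_1, \ldots, v_k$ its vertices. For each $i$, let $\alpha_i$ be the smallest $\alpha \ge 0$ such that the scaled reflection $v_i^\alpha := c + \alpha(c - v_i)$ meets $\partial P$, and set $\alpha_0 := \max_i \alpha_i$. For $\alpha > \alpha_0$, every $v_i^\alpha$ lies strictly outside $P$, and I define $P_\alpha := \conv(V(P) \cup \{v_i^\alpha : 1 \le i \le k\})$. This is a polygon with up to $2k$ vertices, and $P_\alpha \to P$ in Hausdorff distance as $\alpha \to \alpha_0^+$. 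At $\alpha = 1$ we have $P_\alpha = \conv(P \cup (2c - P))$, which is centrally symmetric about $c$ and hence a zonotope in the plane, so \VG~by Corollary~\ref{cor:zonotope}. For $\alpha \in (\alpha_0, 1]$, the auxiliary vertices are positioned so that the centroid of $P_\alpha$ (and nearby interior points) is covered by the half-copies at those auxiliary vertices, while the corner regions near each original vertex are covered by its own half-copy. For the equilateral triangle the construction produces a hexagon with threefold rotational symmetry whose \VG~property can be verified by direct analysis of its six half-copies.

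The main obstacle will be to verify the covering condition $P_\alpha \subseteq \bigcup_{v \in V(P_\alpha)} (v + P_\alpha)/2$ uniformly for $\alpha \in (\alpha_0, 1]$, and not only at $\alpha = 1$ where it follows from central symmetry. The critical regime is $\alpha$ just above $\alpha_0$, where the auxiliary vertices are only slightly outside $P$; here the coverage must transition smoothly from half-copies at auxiliary vertices (covering the central region) to those at original vertices (covering the corners). For general $P$ lacking symmetry, this verification is delicate and may require refining the construction --- for instance by placing more than one auxiliary vertex per original vertex, or by combining the vertex enrichment with a small zonotope summand and invoking Theorem~\ref{thm:adding-a-seg} to preserve \VG~along the way.
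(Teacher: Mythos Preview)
Your proposal has a concrete error in the convergence step and leaves the key \VG~verification entirely open.

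The claim $P_\alpha\to P$ as $\alpha\to\alpha_0^+$ is false in general. Since $\alpha_0=\max_i\alpha_i$, for every index $j$ with $\alpha_j<\alpha_0$ the point $v_j^{\alpha_0}$ already lies strictly outside $P$, and hence $\lim_{\alpha\to\alpha_0^+}P_\alpha=\conv\bigl(V(P)\cup\{v_i^{\alpha_0}\}_i\bigr)\supsetneq P$. For a concrete failure, take the trapezoid with vertices $(0,0),(4,0),(3,1),(1,1)$ and vertex-centroid $c=(2,\tfrac12)$: one computes $\alpha_1=\alpha_2=\tfrac35$ but $\alpha_3=\alpha_4=1$, so $\alpha_0=1$, the interval $(\alpha_0,1]$ is empty, and the only polygon your family produces is $P_1=[0,4]\times[0,1]$, at fixed positive Hausdorff distance from $P$. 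A single parameter $\alpha$ cannot push all auxiliary points simultaneously just outside $\partial P$; allowing each $v_i$ its own parameter destroys the endpoint argument ``$P_1$ is centrally symmetric''.

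Even in the favourable equilateral-triangle case (where all $\alpha_i=\tfrac12$ and $P_\alpha\to P$ does hold), you have not shown that $P_\alpha$ is \VG~for any $\alpha<1$. Knowing that $P_1$ is \VG~gives no control over $\lambda(P_\alpha)$ for smaller $\alpha$: there is no monotonicity or continuity to invoke, and Proposition~\ref{prop:lower bound} shows that small perturbations of a non-\VG~polytope remain non-\VG, so a family interpolating toward the triangle should be expected to lose the property well before $\alpha$ reaches $\tfrac12$. The paper circumvents both issues by a different mechanism: it bows each edge of $P$ outward along a shallow circular arc to obtain a strictly convex body $Q'\supset P$ with $d_H(P,Q')<\varepsilon$, observes that the open half-copies $\{\intr(u+Q')/2:u\in\partial Q'\}$ cover $\intr(Q')$ (using that $Q'$ is not centrally symmetric), extracts a finite subcover of a compact portion of $P$, and only then defines $Q$ as a polygonal approximation of $Q'$ whose vertex set contains the centres of that subcover together with $V(P)$. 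The crucial difference is that the extra vertices are chosen \emph{after} one already knows which half-copies suffice, rather than by a rigid reflection formula fixed in advance.
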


We shall make use of the following lemma (which actually  holds in any dimension). 
\begin{lemma}\label{lem:triv}
	Let $P\sub \R^2$ be a polytope  with non-empty interior. For any   $u \in \partial P$, there exists $r>0$ such that 
	\[(u+rB_2^2)\cap P = (u+rB_2^2)\cap \frac 12(P+u).\]
\end{lemma}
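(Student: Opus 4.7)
My plan is to reduce both sides of the desired equality to the local geometry of the tangent cone of $P$ at $u$. Let $C = u + \bigcup_{t\ge 0} t(P-u)$ denote the (closed) tangent cone of $P$ at $u$; since $u\in \partial P$, this is a closed convex cone with apex at $u$. The crucial, essentially tautological, property of $C$ for this lemma is that, being a cone with apex $u$, it is invariant under the homothety $h\colon x\mapsto u+\tfrac12(x-u)$ centered at $u$, i.e.\ $\tfrac12(C+u)=C$.

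The first step is the standard ``local tangent cone'' statement: there exists $r_0>0$ such that
\[
P\cap (u+r_0 B_2^2) \;=\; C\cap (u+r_0 B_2^2).
\]
To prove this, I would write $P$ as a finite intersection $P=\bigcap_{i=1}^m H_i$ of closed half-spaces and choose $r_0>0$ small enough that every $H_i$ whose bounding hyperplane does \emph{not} pass through $u$ already contains the ball $u+r_0 B_2^2$; this is possible precisely because $u$ lies in the interior of each such $H_i$. Inside $u+r_0 B_2^2$ the polytope $P$ then coincides with the intersection of the ``active'' half-spaces at $u$, and this is exactly $C$.

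Given this, the conclusion follows by a direct check with $r=r_0/2$. For any $x\in u+rB_2^2$ one has $\|(2x-u)-u\|=2\|x-u\|\le r_0$, so $2x-u$ also lies in $u+r_0 B_2^2$. Applying the first step separately to $x$ and to $2x-u$, and using that $h(C)=C$, both membership conditions $x\in P$ and $x\in \tfrac12(P+u)$ (the latter meaning $2x-u\in P$) are equivalent to $x\in C$. Hence the intersections with $u+rB_2^2$ of $P$ and of $\tfrac12(P+u)$ both coincide with $C\cap (u+rB_2^2)$, which is the desired equality.

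The only nontrivial ingredient is the local tangent cone fact, but this is entirely routine for polytopes (the finiteness of the defining half-space family is all one needs), so no real obstacle arises. The content of the lemma is really that $P$ and its $\tfrac12$-homothetic image with center $u$ share the same tangent cone at $u$ and therefore agree in a small enough neighbourhood; nothing in the argument is two-dimensional, matching the author's remark that the lemma actually holds in every dimension.
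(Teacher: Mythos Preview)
Your argument is correct, and it takes a genuinely different route from the paper's. The paper argues directly: it sets $r$ equal to half the minimum distance from $u$ to any edge of $P$ not containing $u$, then for $x\in (u+rB_2^2)\cap P$ it considers the farthest boundary point $u'$ of $P$ with $x\in[u,u']$, checks that $|u-u'|\ge 2r$, and concludes that $x$ lies in the first half of the segment $[u,u']$, hence in $\tfrac12(P+u)$. Your approach instead identifies $P$ locally with its tangent cone $C$ at $u$ via the finite half-space description, and then exploits the scale-invariance of $C$ about its apex. The paper's proof is more hands-on and stays close to the combinatorics of the face structure, while yours is more conceptual and makes the dimension-independence (which the paper only remarks on) completely transparent; on the other hand, the paper's choice of $r$ is explicit in terms of the geometry of $P$, whereas yours is obtained existentially from the half-space representation.
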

\begin{proof}
	We denote the $1$-dimensional faces of $P$ by $E(P)$, that is, edges of $P$. 
	Fix some $u\in \partial P$ and denote
	\[r= \frac 12\min \{d(u,E): E\in E(P), u\not\in E \}>0.\]
	Given $x \in (u+rB_2^2)\cap P$,  denote by $u'\in\partial P$ the  point farthest from $u$ for which $x\in [u,u']$. It is important to note that $u'\neq x$ as by the definition of $r$, in the only relevant case which is $x\in\partial P$, clearly $x$ and $u$ must be on the same edge, but  $x\not\in V(P)$ and there will be a vertex $u'$  of $P$ farther from $u$ with $x\in [u,u']$. 
	By our choice of $r$, we have  $d(u,u')\ge 2r$ (again, one can distinguish the two cases, $x\in \partial P$ and $x\in {\rm int}(P)$). Since $d(x,u)\le r$ we see that in fact $x\in [u,  (u+u')/2]$. However, $[u, (u+u')/2]\sub (P+u)/2 $ so that $x \in (P+u)/2$, as needed. 
\end{proof}

\begin{proof}[Proof of Theorem \ref{thm:dens}]
	Let $P$ be a polytope and $\epsl > 0$. 
	Clearly, we may also assume that $P$ is not \VG, so that, in particular, $P$ is not centrally-symmetric up to any translation (recall that in that case, $P$ would be a zonotope, which is \VG, by Corollary \ref{cor:zonotope}). 
	Our goal is to construct a polytope $Q \in {\rm VG}(\RR^2)$ such that $d_H(P,Q)\le \epsl$.
	
	By Lemma \ref{lem:triv}, there exists some $\delta>0$ such that for every vertex $v$ of $P$ it holds that   $P\cap\intr(v+\delta B_2^2)\sub(P+V(P))/2$. 
	
	Denote the set of vertices of $P$ by $\{v_i\}_{i=1}^m$, and set $v_{m+1}=v_1$. Assume that the vertices are labeled so  that $E_i=(v_i,v_{i+1})$ is an edge of $P$ (i.e., a $1$-dimenional face) for each $i\in\{1,\dots,m\}$. Let $\eta_i$  be the outer normal of $E_i$ and $\theta_i$ be an outer normal of $v_i$ which is different  from all the outer normals $\{\eta_j\}_{j=1}^m$. For each $i\in\{1,\dots,m\}$, consider a circle of sufficiently large radius such that  $E_i$ is a chord with corresponding (minor) arc $A_i$ and the following conditions hold as well (see Figure \ref{fig:dense2D}):
	\begin{enumerate}[label=(\text{C\arabic*})]
		\item\label{e1}$d_H(A_i,E_i)<\epsl$,
		\item\label{e2}$2e_i-A_i\sub P$, where $e_i=\frac {v_i+v_{i+1}}2$ is the center of the edge $E_i$, 
		\item\label{e3} {$A_i\cap \{v_i+\theta_i^\perp\}=v_i$ and $A_i\cap  \{v_{i+1}+\theta_{i+1}^\perp\}=v_{i+1}$}.
	\end{enumerate}
	By   \ref{e3}, we see that  $Q'=\conv\left(\bigcup_{i=1}^m A_i\right)\supseteq P$ is a compact convex set, and that every  boundary point of $Q'$ is an  extremal point of $Q'$. Moreover, since $P$ is not centrally-symmetric up to any translation, we know that $Q'$ is not centrally-symmetric up to any translation, as well.
 
	Consider the family $\{S_u\}_{u\in\partial Q'}=\{\intr(u+Q')/2\}_{u\in\partial Q'}$. We claim that  it forms a cover of $\intr(Q')$ and therefore a cover of  the  compact set 
 $$P':=P\setminus(\bigcup_{v\in V(P)}\intr(v+\delta B_2^2))\sub \intr(Q'),$$
where $\delta>0$ is the positive constant chosen at the beginning of the proof.  
 
	To show that this is indeed a cover, let $x\in\intr(Q')$ and suppose that $x\in (a,b)$ for some $a,b\in\partial Q'$. Note that if $|x-a|<|x-b|$ then $x\in(a,(a+b)/2)$ and hence  $x\in B_a$. This means that $x$ is either in some $S_u$ or is the center of every line section of $P$ passing through $x$. The latter would imply that $Q'-x$ is centrally-symmetric, and as  we assumed that this is not the case,  we  conclude that $\{S_u\}_{u\in\partial Q'}$ is indeed a cover of $Q'$.


\begin{figure}[ht]
	\centering
\begin{tikzpicture}[scale=2]
\def \aradius {3}
\def \bradius {5}
\def \cradius {10}
\def \anorm {45}
\def \bnorm {115}

\tkzDefPoint(1,0){A}
\tkzLabelPoint[below right,](1.1,0.15){\small $v_i$}
\tkzDefPoint(-1,0){B}
\tkzLabelPoint[below left](-1.1,0.15){\small $v_{i+1}$}
\tkzDefPoint(1,-5){C}
\tkzDefPoint(-3,-2){D}
\tkzDefPoint(85:\aradius){U1}

\tkzInterCC[R](A,\aradius)(B,\aradius) \tkzGetPoints{M1}{N1}
\tkzInterCC[R](A,\bradius)(C,\bradius)\tkzGetPoints{M2}{N2}
\tkzInterCC[R](B,\cradius)(D,\cradius)\tkzGetPoints{M3}{N3}

\tkzDrawArc[dashed, color=black](M1,A)(B)
\tkzLabelArc[color=black, above, yshift=-0.05cm](M1,A,B){\small $A_i$}
\tkzDrawArc[dashed](N1,B)(A)
\tkzDrawArc[rotate,dashed, color=black](N2,A)(-10)
\tkzDrawArc[rotate,dashed, color=black](M3,B)(5)
\draw[color=gray] (B)--++(225:1);
\draw[color=gray] (A)--++(270:1);
\draw[color=gray](A)--node[below, color=black, yshift=0.1cm]{\small$E_i$} (B);

\draw[dotted] (A)--++(\anorm-90:1);
\draw[dotted] (A)--++(\anorm+90:1);
\draw[dotted] (B)--++(\bnorm-90:1);
\draw[dotted] (B)--++(\bnorm+90:1);
\draw[->] (A)--node[right, xshift=0.4cm]{$\theta_i$}++(\anorm:1);

\draw[->] (B)--node[above right]{\small $\theta_{i+1}$}++(\bnorm:1);

\coordinate (U) at ($(M1)+(U1)$);

\draw(A)--(U)--(B);
\tkzDrawPoints(A,B,U)
\end{tikzpicture}
\caption{The construction of $Q'$.}
\label{fig:dense2D}
\end{figure}
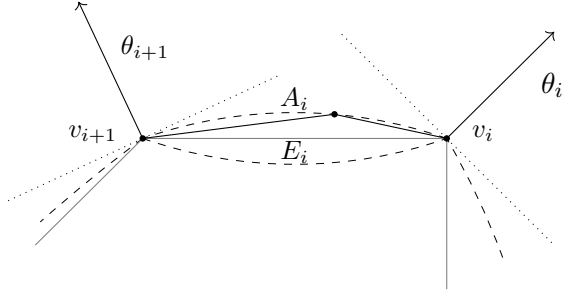
 
	By the compactness of $P'$, we  may select a finite sub-cover $\{S_{u'_j}\}_{j=1}^n$ of   $P'$ such that $V(P)\sub \{u'_j\}_{j=1}^n$. Let $Q_k$ be the convex hull of $k\ge n$ points $\{u'_j\}_{j=1}^n\cup\{u_j\}_{j=1}^{k-n}$ on the boundary of $Q'$ such that $d_H(Q_k,Q')\to 0$.  As one can verify, using a standard compactness argument, for some $k_0$ big enough, the family $\{\intr(u'_j+Q_{k_0})/2\}_{j=1}^n$ forms a cover of $P'$ as well.  
	%
	We define our desired polytope by $Q=Q_{k_0}$.  Clearly, by \ref{e1}, we have $d_H(P,Q)<\epsl$. Also note that, by our construction, $V(P)\sub \{u'_j\}_{j=1}^n\sub V(Q)$.
	
	It is left to show that $Q$ is vertex generated.  Let $x\in Q$. Suppose first  that $x\in P$. If $x\in \intr(v+\delta B_2^2)$ for some $v\in V(P)$ then, by our choice of $\delta>0$,  $x\in (P+V(P))/2$, and since $V(P)\sub V(Q)$, it follows that $x\in (Q+V(Q))/2$, as well. Otherwise,  $x\in P'$ and then, by our choice of $\{u'_j\}$ in the  construction, $x\in (u'_j+Q)/2$ for some vertex $u'_j\in V(Q)$.
	
	Suppose next that $x\in Q\setminus P$. In this case,   $x\in \conv(A_i\cup E_i)$ for some $i$. Let $y\in\partial Q$ be the intersection of the ray emanating from $e_i$ and passing through $x$ with $\partial Q$. 
	Clearly, we also have $y\in \conv\{A_i\cup E_i\}$. If $y \in V(Q)$ then by \ref{e2},  $2x-y\in Q$, and hence $x \in \frac 12(V(Q)+Q)$. 
	Otherwise, let $a,b \in V(Q)$  be the vertices of the edge containing $y$.  By definition, $a,b \in A_i$ and so, by \ref{e2},  we have $2e_i-a,2e_i-b\in Q$. By interchanging the roles of $a$ and $b$, we may assume that  $x \in \conv(e_i,\frac12(a+b),a)$ and so  $2x-a \in\conv\{a,b,2e_i-a\}\subseteq Q$. Consequently, we get that  $x \in \frac 12(Q+V(Q))$, which  completes our proof.
\end{proof}

\color{black}

\subsection{Non-denseness with respect to a non-standard metric}\label{sec:newmetric}

While the question of denseness of \VG~polytopes with respect to the Hausdorff metric remains open in dimensions $n\ge 3$, we may define a weaker metric with respect to which  \VG~polytopes form a closed set within the class of all polytopes. This metric, which we denote $d_F$, is defined for a pair of polytopes as the Hasudorff distance between the sets of their vertices. Since one may approximate a segment, say, in the Hausdorff metric, by a sequence of triangles which are the convex hulls of the segment with a vertex close to the middle of the segment, we see that $d_F$ is weaker that $d_H$ (as this is an example where $d_H\to 0$ and $d_F\not\to 0$). Let us formally introduce this new metric on polytopes.

\begin{definition}
Let $n\ge 1$. 	Given polytopes $P,Q\subseteq \RR^n$ with vertex sets $V(P)$ and $V(Q)$ respectively, let $d_F(P,Q) = d_H(V(P), V(Q))$ or, equivalently, 
	\[ d_F(P,Q) = \min\{\epsl>0: V(Q) \sub  V(P)+ \epsl B_2^n\quad{\rm and}\quad V(P)  \sub  V(Q)+ \epsl B_2^n\}.\] Clearly $d_F(P, Q) \ge d_H(P, Q)$ (since $V(P)+\varepsilon B_2^n \subseteq P+ \epsl B_2^n$). 
\end{definition}

It turns out that with respect to this metric and for any $\lambda$, the class  ${\rm VG}(\R^n,\lambda)$ is closed. 

\begin{proposition}
Let $n\ge 1$ and $\lambda \in [1/(n+1), 1/2]$.	For any polytope $P\sub \RR^n$ which is not $\lambda$-\VG, there exists some $\epsl>0$ such that if $d_F(P, Q)<\epsl$ then $Q$ is not $\lambda$-\VG~as well. Equivalently, for polytopes $P,(P_m)_{m=1}^\infty$ in $\RR^n$,  if $P_m\in {\rm VG}(\R^n,\lambda)$ and $d_F(P_m , P)\to 0$ then $P\in  {\rm VG}(\R^n,\lambda)$.   
\end{proposition}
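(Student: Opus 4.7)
The plan is to show the equivalent statement: if $P_m\in{\rm VG}(\R^n,\lambda)$ and $d_F(P_m,P)\to 0$, then $P\in{\rm VG}(\R^n,\lambda)$. The key observation that makes this easy is that $d_F$ dominates $d_H$, so we get two convergences for free (of the polytopes themselves and of their vertex sets), and since each $V(P)$ is a finite set, limits of vertices of nearby polytopes must eventually be vertices of $P$ itself.

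First I would fix an arbitrary $x\in P$ and seek to write it as $(1-\lambda)y+\lambda v$ with $y\in P$ and $v\in V(P)$. Since $d_H(P_m,P)\le d_F(P_m,P)\to 0$, one can choose points $x_m\in P_m$ with $x_m\to x$. Using that each $P_m$ is $\lambda$-\VG, decompose
\[ x_m = (1-\lambda)y_m + \lambda v_m, \qquad y_m\in P_m,\ v_m\in V(P_m). \]
All the polytopes $P_m$ lie in a common bounded set (e.g.\ $P+B_2^n$ for $m$ large), so by compactness and passage to a subsequence we may assume $y_m\to y$ and $v_m\to v$ in $\R^n$.

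Next I would verify the limits lie in the right sets. The inclusion $P_m\subseteq P+\epsilon_m B_2^n$ with $\epsilon_m=d_F(P_m,P)\to 0$ implies $y\in P$ by closedness. For $v$, the inclusion $V(P_m)\subseteq V(P)+\epsilon_m B_2^n$ means each $v_m$ is within $\epsilon_m$ of some vertex $v'_m\in V(P)$; since $V(P)$ is a finite set, the sequence $(v'_m)$ takes only finitely many values, so along a further subsequence $v'_m$ is constant and equals some $v^*\in V(P)$, forcing $v=v^*\in V(P)$. Passing to the limit in the decomposition gives
\[ x=(1-\lambda)y+\lambda v \in (1-\lambda)P+\lambda V(P), \]
and since $x\in P$ was arbitrary, $P=(1-\lambda)P+\lambda V(P)$, so $P\in{\rm VG}(\R^n,\lambda)$.

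There is essentially no obstacle beyond bookkeeping; the only subtlety worth flagging is that without the control on vertices provided by $d_F$, the convergence $v_m\to v$ would not suffice to place $v$ in $V(P)$ (indeed this is exactly why Hausdorff-denseness remains open in higher dimensions, as in the simplicial approximation of a segment discussed earlier). The finiteness of $V(P)$, combined with the Hausdorff convergence $V(P_m)\to V(P)$, is what makes the argument go through.
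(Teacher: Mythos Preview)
Your proof is correct and follows essentially the same approach as the paper's: pick $x_m\to x$, decompose via the $\lambda$-\VG\ property, pass to a convergent subsequence, and use $d_H(P_m,P)\to 0$ and $d_H(V(P_m),V(P))\to 0$ to place the limits $y,v$ in $P$ and $V(P)$ respectively. Your treatment of why $v\in V(P)$ (via finiteness of $V(P)$) is slightly more explicit than the paper's, but the argument is the same.
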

\begin{proof}
	Consider a sequence of polytopes $(P_m)_{m=1}^\infty\subseteq {\rm VG}(\R^n,\lambda)$ and a  polytope $P\sub \RR^n$ such that $d_F(P_m,P)\longrightarrow 0$. In particular $d_H(P_m,P)\to 0$ and $P_m$ are uniformly bounded. Given $x \in P$,   one may find a sequence $x_m \in P_m$ such that $x_m\to x$. As $P_m\in {\rm VG}(\R^n,\lambda)$ we can write  $x_m =  (1-\lambda)y_m+\lambda v_m$ with 	
	$y_m\in P_m$ and $v_m \in V(P_m)$. Since $v_m$ and $y_m$ are bounded,  there exists sub-sequences $v_{m_i}$ $y_{m_i}$ which converge. We  call their limits $v,y$ respectively, and note that $x = (1-\lambda)y+\lambda v$. Using again that $P_m\to P$ in the Hausdorff distance and $V(P_m)\to V(P)$ in the Hausdorff's distance, it follows that $v \in V(P)$ and $y \in P$. We thus see $P = (1-\lambda)P + \lambda V(P)$, as claimed.  
\end{proof}

\subsection{Generalized \VG~polytopes}\label{sec:minkadd}\label{sec:gen_zon}

We start with the result that any polytope is a so-called ``generalized \VG~polytope'' namely that  for any polytope one can find some zonotope such that their Minkowski sum is \VG. The name is inspired by the notion of a ``generalized zonoid'' from convexity (see \cite{Sch_book}*{Section 3.5}) which is defined to be a convex body $K$ such that there exist two zonoids $Z_1, Z_2$ with $K+Z_1 = Z_2$ (a zonoid is defined to be a limit of zonotopes in the Hausdorff metric, for definition and many interesting facts see \cite{Sch_book}*{Section 3.5}). For Zonoids, however, if a polytope is a generalized zonoid then it must be a zonotope to begin with (see Corollary 3.5.7 in \cite{Sch_book}) whereas for our question, any polytope can be summed with a \VG~polytope to obtain a \VG~polytope.   On the other hand, Theorem \ref{thm:zon} is similar in spirit to the fact that generalized zonoids are dense within the class of centrally symmetric convex bodies in the Hausdorff metric (see \cite{Sch_book}*{Corollary 3.5.7}).

\begin{manualtheorem}{\ref{thm:zon}}
Let $n\ge 1$. For any polytope $P\subseteq \RR^n$ there exists a zonotope $Z\sub\RR^n$ such that $P+Z$ is \VG.
\end{manualtheorem}
\begin{proof}
	We  prove the claim by induction on the dimension $n$. For $n=1$ there is nothing to prove as a segment is \VG. 
	
	Next, assume   the statement of the theorem holds for any dimension $k<n$, and let $P\sub \RR^n$ be a polytope. Its boundary is the union of a finite number of faces $F_i$ of dimension $k<n$, and so by the inductive assumption we may, for each $i$,  find a zonotope $Z_i$ such that $F_i+Z_i $ is \VG.  
	Denote $Z = \sum Z_i$, and by \eqref{eq:Fu_sum} we have for any $\theta$
	\[F_{\theta}(P+Z) = F_{\theta}(P)+\sum F_{\theta}(Z_i).\]
We do not claim that $P' = P+Z$ is \VG, but only that it satisfies the condition $\partial P' \sub (P'+ V(P'))/2$ (note that for $n=2$ this condition is satisfied automatically as all facets are segments). 
	To show this containment, consider some vector $\theta\in S^{n-1}$, and let $i=i(\theta)$ denote the index of the corresponding face of $P$, namely $F_\theta(P) = F_i$. 
	Because $Z_i\sub \theta^\perp$, clearly  $F_{\theta}(Z_{i})=Z_{i}$, which means in particular that 
	\[F_{\theta}(P+Z) = F_{i}+Z_{i}+\sum_{j\ne i}F_{\theta}(Z_j).\]
	But $F_{\theta}(Z_j)$ are zonotopes and $F_{i}+Z_{i}\in {\rm VG}(\RR^n)$, and so by Theorem \ref{thm:sum} also their sum is \VG. This shows that the face of $P'$ in direction $\theta$ is \VG. Since $\theta$ was arbitrary, this shows that every face of $P'$ is \VG, and we get that $\partial P' \sub (P'+ V(P'))/2$. 
	
	We  next find a zonotope $Z'$ such that $P'+ Z'$ is \VG. 
	Choose $\eta \in S^{n-1}$ such that 
	\begin{equation}\label{highdvertex}
		F_\eta(P')=\{u_1\},\,F_{-\eta}(P')=\{u_2\}
	\end{equation} for some $u_1,u_2 \in V(P')$.
(namely $\eta, -\eta$ are each in the interior of the normal cone of some vertex, this is true for almost every $\eta$).

	Note that $P' \sub [u_1,u_2]+\eta^\perp$, so there exists a zonotope $Z' \sub \eta ^\perp$   large enough such that $P' \sub [u_1,u_2]+Z'$. 
	Since a zonotope is a sum of line segments, we see that, by inductively using Theorem  \ref{thm:sum} (the conditions of which are satisfied by our construction of $P'$) we have
	\begin{equation}\label{eq:pplusz}(P'+Z')\setminus\frac 12(P'+Z'+V(P'+Z'))\subseteq P'\setminus \frac12 (P'+ V(P'))\sub P'.\end{equation}
Note that by construction $F_\eta(Z')=F_{-\eta}(Z')=Z'$ and hence, by \eqref{highdvertex} and \eqref{eq:Fu_sum}, we have $F_\eta(P'+Z')=u_1+Z'$ and $F_{-\eta}(P'+Z')=u_2+Z'$. Since  vertices of a polytope's face are vertices of the polytope itself we get
	\[V([u_1,u_2]+Z')\subseteq V([u_1,u_2])+V(Z')= V(u_1+Z')\cup V(u_2+Z')\subseteq V(P'+Z').\]
	Finally from the fact $[u_1,u_2]+Z'$ is a zonotope,  Corollary \ref{cor:zonotope} implies that it is \VG, 
	and therefore
	\[ P' \sub [u_1,u_2]+Z' \subseteq \frac12 ( [u_1,u_2]+Z' + V([u_1,u_2]+Z' ))\subseteq 
	\frac12 ( P'+Z' + V(P'+Z' )).	
	\]
 Joining this with \eqref{eq:pplusz}, we see that $(P'+Z')\setminus (P'+Z'+V(P'+Z'))/2$ must be empty, meaning that $P'+Z'$ is \VG, and the
	proof is complete. 
\end{proof} 

\begin{remark}\label{rem:dim}
Let us remark on the degrees of freedom in our final choice of $Z$ in the proof of Theorem \ref{thm:zon}. For a two dimensional polytope $P\sub \R^2$, our choice of $\eta$ was only limited to the condition that  $F_\eta(P)=\{u_1\},\,F_{-\eta}(P)=\{u_2\}$ for some $u_1,u_2\in V(P)$, which is  true for almost any $\eta \in S^1$.
		More generally, for any polytope $P\sub \R^n$ for which
		\[\partial P \sub \frac{1}{2}(P+V(P)),\]
		our choice of $\eta$ in the proof requires only that $F_\eta(P)=\{u_1\},\,F_{-\eta}(P)=\{u_2\}$ for some $u_1,u_2\in V(P)$, which is true for almost any $\eta \in S^{n-1}$, and the choice of the zonotope $Z$ is only limited by $P \sub [u_1,u_2]+Z$. 
\end{remark}
	
Note that if one could choose, in Theorem \ref{thm:zon}, a very small $Z$, namely if for every $\epsl>0$ 
one could find such a $Z$ contained in $\epsl B_2^n$, then this would imply denseness of \VG~polytopes, with respect to the Hausdorff distance, within the class of all polytopes (and hence within the class of all convex bodies). However, this cannot hold true. Indeed, denote the Minkowski subtraction of two sets $A,B\sub\RR^n$ by
$$
A\ominus B=\{x\in\RR^n\,:\, x-B\sub \intr(A)\}.
$$
Our next proposition implies  that if $P\not\in {\rm VG}(\R^n,\lambda)$ then its Minkowski sum with any body in a small enough ball cannot be in ${\rm VG}(\R^n,\lambda)$.

\begin{proposition}\label{prop:lower bound}
Let $n\ge 1$ and $\lambda \in [1/(n+1), 1/2]$.	Let $P,Q\subseteq \R^n$ be polytopes. Then
	\[\left(P\setminus\left((1-\lambda)P + \lambda V(P)\right)\right)\ominus Q \subseteq \left(P+Q\right) \setminus\left((1-\lambda)(P+Q) + \lambda V(P+Q)\right).\]
	In particular, if $P+Q \in {\rm VG}(\R^n,\lambda )$ then no translate of $Q$ can  fit into $P\setminus\left((1-\lambda)P + \lambda V(P)\right)$. 
\end{proposition}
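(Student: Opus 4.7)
My plan is the following. Take a point $x$ in the left-hand side, so that $x-q\in P\setminus((1-\lambda)P+\lambda V(P))$ for every $q\in Q$ (the interior in the definition of $\ominus$ is immaterial for the argument since $(1-\lambda)P+\lambda V(P)$ is closed). Since $x-q\in P$ for some (in fact every) $q\in Q$, we immediately get $x\in P+Q$, which is half of what we want.

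The remaining task is to show $x\notin (1-\lambda)(P+Q)+\lambda V(P+Q)$. I will argue by contradiction. The key structural fact I will invoke is that every vertex of a Minkowski sum of polytopes is a sum of vertices: if $v\in V(P+Q)$ then $v=v_P+v_Q$ with $v_P\in V(P)$, $v_Q\in V(Q)$. (This is immediate from \eqref{eq:Fu_sum}: a vertex is $F_u(P+Q)=F_u(P)+F_u(Q)$ for generic $u$, and a singleton sum forces each summand to be a singleton.)

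So, assume for contradiction $x=(1-\lambda)(p+q)+\lambda(v_P+v_Q)$ with $p\in P$, $q\in Q$, $v_P\in V(P)$, $v_Q\in V(Q)$. I regroup as
\[
x=\bigl((1-\lambda)p+\lambda v_P\bigr)+\bigl((1-\lambda)q+\lambda v_Q\bigr).
\]
Since $Q$ is convex and $v_Q\in V(Q)\subseteq Q$, the point $q':=(1-\lambda)q+\lambda v_Q$ lies in $Q$. Hence $x-q'=(1-\lambda)p+\lambda v_P\in(1-\lambda)P+\lambda V(P)$, contradicting the defining property of $x$ that $x-Q\subseteq P\setminus((1-\lambda)P+\lambda V(P))$.

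The statement about no translate of $Q$ fitting into the ``bad set'' $P\setminus((1-\lambda)P+\lambda V(P))$ when $P+Q\in {\rm VG}(\RR^n,\lambda)$ is then just the contrapositive of the inclusion just established. I do not expect any real obstacle here: the only non-trivial ingredient is the vertex decomposition of $V(P+Q)$, and after that the convexity of $Q$ does the work in a single line.
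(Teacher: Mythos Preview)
Your proof is correct and follows essentially the same route as the paper's. The paper phrases the key step as the set inclusion $(1-\lambda)(P+Q)+\lambda V(P+Q)\subseteq Q+(1-\lambda)P+\lambda V(P)$, which is exactly your regrouping argument (using $V(P+Q)\subseteq V(P)+V(Q)$ and the convexity of $Q$) written at the level of sets rather than elements; the contradiction you derive is the contrapositive of that inclusion.
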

\begin{proof}
	It is clear that $\left(P\setminus\left((1-\lambda)P + \lambda V(P)\right)\right)\ominus Q\subseteq P \ominus Q \sub P+Q$, so we need only show that this set includes no points in $ \left((1-\lambda)(P+Q) + \lambda V(P+Q)\right)$.
	The latter is included in $Q+(1-\lambda)P+\lambda V(P)$, so in particular for any point $x\in  \left((1-\lambda)(P+Q) + \lambda V(P+Q)\right)$  there exists some $y\in Q$ such that $x-y\in (1-\lambda)P+\lambda V(P)$, namely $x-Q \not\subseteq P\setminus ((1-\lambda)P+\lambda V(P))$. This means  that $x\not\in (P\setminus\left(  (1-\lambda)P+\lambda V(P)\right))\ominus Q$, as claimed. 
\end{proof}

\section{Vertex generated symmetric polytopes}\label{sec:anpotherproofforzn}\label{sec:central_sym}

Restricting to the class of centrally-symmetric polytopes enables us to prove additional properties regarding $\lambda$-\VG~polytopes.  

We  first show that for a centrally symmetric polytope, being $\lambda$-\VG~is equivalent to all of its facets being $\lambda$-\VG, namely that for centrally symmetric $P$, the converse of Lemma \ref{lem:fac} holds. This gives an easy inductive proof for the fact that zonotopes are \VG, reproving Corollary \ref{cor:zonotope}. After establishing this fact, we show that any $n$-dimensional $\lambda$-\VG~polytope can be realized as a facet of a centrally symmetric $(n+1)$-dimensional $\lambda$-\VG~polytope. In particular this means that if one knows  that centrally symmetric $\lambda$-\VG~polytopes in dimension $n+1$ are closed under Minkowski addition, then so is the class of all $\lambda$-\VG~polytopes in $\RR^n$. While this fact is yet to be proven, we do show that if two centrally symmetric $\lambda$-\VG~polytopes are in generic position then their Minkowski sum is also $\lambda$-\VG~(we explain the notion of a generic pair of polytopes in detail below).  Finally we prove a statement regarding the sum of the vertices $V(P)$ of a polytope with some linear image of $P$, a special case of which is the following curious fact: If $P-V(P)$ is a convex set, then $P$ is centrally symmetric and \VG, namely $P-V(P)=P+V(P)=2P$. 

We start by showing that for centrally symmetric polytopes, the converse of Lemma \ref{lem:fac} holds. 

\begin{lemma}\label{lemma:csVG}
Let $n\ge 1$ and $\lambda \in [1/(n+1), 1/2]$.	Let   $P\sub \RR^n$ be a centrally symmetric polytope and assume that 
	$\partial P\subseteq (1-\lambda)P+\lambda V(P)$. Then $P$ is $\lambda$-vertex generated. 
\end{lemma}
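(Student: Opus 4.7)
The plan is to show that every point of $P$ admits a decomposition $x=(1-\lambda)w+\lambda v$ with $w\in P$ and $v\in V(P)$, bootstrapping from the boundary hypothesis by pushing interior points radially to the boundary and exploiting central symmetry.

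First I would reduce to the case where $P$ is symmetric about the origin, so that $0\in P$ and $v\in V(P)\Leftrightarrow -v\in V(P)$. The borderline case $x=0$ is then handled directly: pick any $v\in V(P)$, note that $\frac{\lambda}{1-\lambda}\in[0,1]$ since $\lambda\le 1/2$, so $\frac{\lambda}{1-\lambda}(-v)\in P$, and write $0=(1-\lambda)\bigl(\frac{\lambda}{1-\lambda}(-v)\bigr)+\lambda v$.

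For $x\in P\setminus\{0\}$, let $z$ be the intersection of the ray $\{sx:s\ge 0\}$ with $\partial P$, so that $x=tz$ for some $t\in(0,1]$. By the hypothesis, write $z=(1-\lambda)y+\lambda v$ with $y\in P$ and $v\in V(P)$. Define
\[ w \;=\; ty \;-\; \frac{(1-t)\lambda}{1-\lambda}\,v, \]
and verify the algebraic identity $(1-\lambda)w+\lambda v = t\bigl((1-\lambda)y+\lambda v\bigr) = tz = x$. It remains to check $w\in P$, which I would do by exhibiting $w$ as a convex combination of $y$, $-v$, and $0$: the coefficients are
\[ t, \qquad \frac{(1-t)\lambda}{1-\lambda}, \qquad \frac{(1-t)(1-2\lambda)}{1-\lambda}, \]
all non-negative because $t\in[0,1]$ and $\lambda\in[1/(n+1),1/2]$, and summing to $1$ by a short calculation. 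Since $y,-v,0$ all lie in $P$ (using central symmetry for $-v$ and for $0$), convexity of $P$ gives $w\in P$, concluding the argument.

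The whole proof is essentially an elementary computation; the only nontrivial ingredient is the choice of decomposition on the boundary combined with the availability of $-v$ as a vertex, which is where central symmetry is used decisively. The main thing to be careful about is that the constraint $\lambda\le 1/2$ is exactly what makes the coefficient of the origin non-negative — this is where the hypothesis $\lambda\le 1/2$ enters, and it shows that one cannot expect the same lemma for $\lambda>1/2$ (which is anyway excluded by the general upper bound $\lambda(P)\le 1/2$).
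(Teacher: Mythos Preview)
Your proof is correct and follows essentially the same approach as the paper's: push $x$ radially to a boundary point, apply the hypothesis there to obtain a vertex $v$, and then use central symmetry together with $\lambda\le 1/2$ (so that $-\tfrac{\lambda}{1-\lambda}v\in P$) to pull the decomposition back to $x$. The only cosmetic difference is that the paper phrases the last step as ``both $0$ and the boundary point lie in the convex set $\lambda v+(1-\lambda)P$, hence so does $x$'', whereas you explicitly construct $w$ and exhibit it as a convex combination of $y$, $-v$, and $0$; these are two ways of writing the same computation.
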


\begin{proof}
	Given $x\in P$ let $c\ge 1$ such that $cx\in \partial P$.
	By our assumption there exists $v\in V(P)$  such that $cx\in \lambda v+(1-\lambda)P$. 
	However, since $P$ is centrally symmetric, also $0\in \lambda v+(1-\lambda)P$ (since $0 = \lambda v+ (1-\lambda)(-\frac{\lambda}{1-\lambda}v)$ and $-\frac{\lambda}{1-\lambda}v\in P$). The set $\lambda v+(1-\lambda)P$ is convex, and so together with $0$ and $cx$ it includes $x = (1-1/c)0+ (1/c)cx$, and the proof is complete. 
\end{proof}

\begin{remark}\label{rem:weaker_than_Cen_Sym}
    In fact we see that the ``central symmetry'' condition is an overshoot for $\lambda<1/2$, and we can ask for less, $-\lambda P \sub (1-\lambda)P$.
    Note that we always have this for $\lambda = 1/(n+1)$ since  $-P\sub nP$. 
    This observation implies, however, that if for example,  $-P\sub (n-1)P$, then (using that the facets are $1/n$-\VG) the whole polytope is $1/n$-\VG. 
\end{remark}

Using Lemma \ref{lemma:csVG} we can provide a simple proof for Corollary \ref{cor:zonotope}. 

\begin{proof}[Another proof of Corollary \ref{cor:zonotope}]
Recall that a zonotope is centrally symmetric, and all of its faces are centrally symmetric zonotopes of lower dimension (in fact, a polytope is a zonotope if and only of all of its two-dimensional faces are centrally symmetric, see \cite{Sch_book}*{Theorem 3.5.2}). We prove the corollary by induction, where clearly one dimensional zonotopes are \VG~since these are simply segments. If we know that $(n-1)$-dimensional zonotopes are \VG, and we are given an $n$-dimensional zonotope $Z$ then it is centrally symmetric, and its boundary $\partial Z$ is the union of finitely many zonotopes which are \VG~by induction, so that $\partial Z \subseteq (\partial Z+V(\partial Z))/2 \subseteq (  Z+V(Z))/2$. Applying Lemma \ref{lemma:csVG}, $Z$ is \VG~as well. 
\end{proof}

\begin{remark}
It is well-known  that all centrally symmetric polytopes in the plane $\RR^2$ are zonotopes, see e.g., \cite{Sch_book}*{Corollary 3.5.7} and hence \VG. However, in higher dimensions there exist centrally symmetric polytopes which are not zonotopes. One such example is the cross polytope 
for $n \geq 3$, which has faces which are simplices and hence is  not  \VG. On the other hand, there also exist centrally symmetric \VG~polytopes which are not zonotopes, and one such example is the sum of the cross polytope with a suitable chosen zonotope. Indeed, one may choose a suitable zonotope by Theorem \ref{thm:zon},  and the fact that the cross polytope summed with a zonotope is not a zonotope follows e.g.~from \cite{Sch_book}*{Corollary 3.5.7}. 
\end{remark}	
	
Next we show that every \VG~polytope in $\RR^n$ can be realized as a facet  of a centrally symmetric \VG~polytope in dimension $(n+1)$. 

\begin{proposition}
	Let $n\ge 1$ and $\lambda \in [1/(n+1), 1/2]$, and let $P\in {\rm VG}(\R^n, \lambda)$. Then  there exists a 
 centrally-symmetric  $Q\in {\rm VG}(\R^{n+1}, \lambda)$ 
	such that $P = F_u(Q)$ for some $u\in S^{n}$. (In fact, all the $(n-1)$-dimensional faces of $Q$ which are not in directions $\pm u$ will be in ${\rm VG}(\R^n)$). 
\end{proposition}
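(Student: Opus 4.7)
I will take $Q$ to be the convex hull of $P$ and $-P$ lifted to opposite heights in an extra coordinate. After translating so that the centroid of $P$ is at the origin, set $u = e_{n+1}$ and define
$$Q \,:=\, \mathrm{conv}\bigl((P \times \{1\}) \cup ((-P) \times \{-1\})\bigr) \,\subseteq\, \mathbb{R}^{n+1}.$$
Central symmetry about the origin is immediate since $-Q = Q$; the vertex set is $V(Q) = (V(P) \times \{1\}) \cup ((-V(P)) \times \{-1\})$; the face in direction $u$ is $F_u(Q) = P \times \{1\}$, identifying $P$ with the facet of $Q$ in direction $u$; and the slice at height $t \in [-1, 1]$ is explicitly $Q_t = \tfrac{1+t}{2}P + \tfrac{1-t}{2}(-P)$.

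\textbf{Main step.} By Lemma~\ref{lemma:csVG}, since $Q$ is centrally symmetric, to establish $Q \in \mathrm{VG}(\mathbb{R}^{n+1}, \lambda)$ it suffices to show $\partial Q \subseteq (1-\lambda)Q + \lambda V(Q)$. For the top facet $P \times \{1\}$, lift the $\lambda$-VG decomposition of $P$: if $x = (1-\lambda)x' + \lambda v$ with $v \in V(P)$, then $(x, 1) = (1-\lambda)(x', 1) + \lambda(v, 1)$ with $(v, 1) \in V(Q)$, as required. The bottom facet is then immediate from central symmetry.

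The remaining facets have the form $F^* = \mathrm{conv}\bigl(F \times \{1\} \cup (-F') \times \{-1\}\bigr)$, with $F = F_{u'}(P)$ and $F' = F_{-u'}(P)$ faces of $P$ of complementary dimensions summing to $n-1$. Given $(x, t) \in F^*$, parametrize $(x, t) = (1-s)(f, 1) + s(-f', -1)$ with $f \in F$, $f' \in F'$, $s \in [0, 1]$, and by central symmetry assume $t \ge 0$. The face $F$ is itself $\lambda$-VG by Lemma~\ref{lem:fac}, yielding a decomposition $f = (1-\lambda)f'' + \lambda v_f$ with $v_f \in V(F) \subseteq V(P)$. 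Take $w := (v_f, 1) \in V(Q)$ and form the residual $y := ((x, t) - \lambda w)/(1-\lambda)$; its temporal coordinate is $y_t = (t-\lambda)/(1-\lambda)$, which lies in $[-1, 1]$ because $t \ge 0 \ge 2\lambda - 1$ (using $\lambda \le 1/2$).

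\textbf{Main obstacle.} The technical heart is verifying that the spatial component $y_x$ lies in the slice
$$Q_{y_t} \,=\, \tfrac{1-\lambda-s}{1-\lambda}\, P \,-\, \tfrac{s}{1-\lambda}\, P.$$
Direct expansion gives $y_x = (1-s)f'' - \tfrac{s}{1-\lambda}(\lambda v_f + v)$, where $v$ is the apex vertex of the pyramidal face. The naive choice $p_2 = \lambda v_f + v$ does not lie in $P$ in general, so one must split more carefully: use the convex combination $\lambda v_f + (1-\lambda) v \in P$ (valid since $\lambda \le 1/2$) and absorb the residual $\lambda v$ term into a modified $p_1$, exploiting the freedom in the $\lambda$-VG decomposition of $f$. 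When $t < 0$, the symmetric argument replaces $w$ with a bottom vertex. Once the side facets are handled, the parenthetical claim follows by applying Lemma~\ref{lem:fac} to $Q$: every face of $Q$ outside directions $\pm u$ inherits the $\lambda$-VG property from $Q$, with the central-symmetry / prism structure in the special case $-P = P$ giving the stronger $\mathrm{VG}(\mathbb{R}^n)$ statement via Theorem~\ref{thm:adding-a-seg}.
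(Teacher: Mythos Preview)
Your construction $Q':=\conv\bigl((P\times\{1\})\cup((-P)\times\{-1\})\bigr)$ coincides with the paper's starting object $P'$, but the paper does \emph{not} claim that $P'$ itself lies in ${\rm VG}(\RR^{n+1},\lambda)$ --- and in general it does not. Instead, the paper Minkowski-adds to $P'$ a sum of zonotopes $\sum_F Z_F$ (one $Z_F$ for each side face $F$ of $P'$, produced via the construction behind Theorem~\ref{thm:zon}), with every participating segment chosen outside $e_{n+1}^\perp$ so that the top and bottom facets remain translates of $\pm P$. Each face of the resulting $Q=P'+\sum_F Z_F$ is then $\lambda$-\VG\ by Theorem~\ref{thm:sum}, and Lemma~\ref{lemma:csVG} closes the argument.

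Your direct verification cannot succeed. Take $n=2$, $\lambda=\tfrac12$, and let $P\subset\RR^2$ be any \VG\ polygon that is not centrally symmetric (such polygons exist by Theorem~\ref{thm:denseinR2}). Each side facet of $Q'\subset\RR^3$ is a join $\conv\bigl(F_{u'}(P)\times\{1\}\cup(-F_{-u'}(P))\times\{-1\}\bigr)$ for some $u'\in S^1$; since $P$ is not centrally symmetric there is an edge-normal $u'$ for which $F_{-u'}(P)$ is a single vertex, or for which both are edges but of unequal length. The corresponding facet of $Q'$ is then a triangle or a non-parallelogram trapezoid --- neither is \VG\ (the trapezoid case by the equality clause in Lemma~\ref{lem:vert}). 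Lemma~\ref{lem:fac} then forces $Q'\notin{\rm VG}(\RR^3)$. Your ``Main obstacle'' paragraph senses exactly this difficulty, but the sketched fix of absorbing the residual term via ``freedom in the $\lambda$-VG decomposition of $f$'' cannot work: when the side facet is itself a simplex there is no such freedom to exploit. Note finally that the parenthetical assertion (that the side facets of $Q$ lie in ${\rm VG}(\RR^n)$, i.e.\ are fully $\tfrac12$-\VG, not merely $\lambda$-\VG) is a genuine byproduct of the zonotope addition, not something recoverable from Lemma~\ref{lem:fac} applied to $Q$.
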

\begin{proof}
	Given $P\in {\rm VG}(\R^n)$, define $P'=\conv(P\times\{1\},-P\times\{-1\})\subseteq \R^{n+1} = \RR^n \times \RR$. 
	The polytope $P'$ is centrally symmetric and its facets in directions $e_{n+1}$ and $-e_{n+1}$ are translates of $P$ and $-P$ respectively (where $(e_i)_{i=1}^{n+1}$ is the standard vector basis in $\RR^n \times \RR$.) Moreover, for any $(u_j)_{j=1}^m\sub  S^{n}\setminus e_n^\perp$ and $(c_j)_{j=1}^m \sub \RR$, the polytope $Q = P'+\sum_{j=1}^m c_j[-u_j, u_j]$ satisfies that its facets in direction directions $e_{n+1}$ and $-e_{n+1}$ are translates of $P$ and $-P$ respectively (again, by using \eqref{eq:Fu_sum}, say). It is also important to note that all faces of $P'$ which are {\em not} in the hyperplanes $H_1 = \{x_{n+1} = 1\}$ $H_{-1} = \{x_{n+1} = -1\}$, are not parallel to these hyperplanes (equivalently, are not orthogonal to $e_{n+1}$). Indeed, a face of $P'$ is a convex hull of some subset of its vertices, and these vertices belong to $H_{1} \cup H_{-1}$, so if the subset includes at least one element from each of the hyperplanes, then the corresponding face includes the edge between these two vertices, which is not orthogonal to $e_{n+1}$.  
	
	Our task is to   choose $(u_j), (c_j)$ such that $Q$ is $\lambda$-\VG. To this end, we will make sure that all of its faces are $\lambda$-\VG~and use Lemma \ref{lemma:csVG}. We will go over all the faces of 
 $P'$ which are not included in the hyperplanes $H_1$ and $H_2$ (as these are automatically $\lambda$-\VG) and use, for each one, the construction in the proof of Theorem \ref{thm:zon}. In other words, for each such face  $F$ of $P'$ we find a zonotope $Z_F\sub {\rm affine}(F)$ such that $F+Z_F \in {\rm VG}(\RR^{n+1},\lambda)$. (If $F$ was 
	$\lambda$-\VG, we pick $Z_F = \{ 0\}$). Moreover, we can, using Remark \ref{rem:dim}, choose  all  the vectors $\xi_i$ participating in the construction of $Z_F = \sum_{i=1}^{m_F} [-\xi_i, \xi_i]$ so that they are not in $e_{n+1}^\perp$ (here it is essential that $F$ is not orthogonal to $e_{n+1}$.  
	We let $Q = P' + \sum_{F} Z_F$ where $F$ runs over all of the  faces of $P'$ which are not orthogonal to $e_{n+1}$. 
	We claim that all faces of $Q$ are $\lambda$-\VG. Indeed, let $u\in S^n$, and denote $F^*$ the face of $P'$ in direction $u$. Employing \eqref{eq:Fu_sum}, as usual, we have 
	\[ F_u (Q) = F^* + \sum_F F_u (Z_F)  = F^* + F_u (Z_{F^*}) + \sum_{F\neq F^*} F_u(Z_F)
	\]
	The set $F^* + F_u (Z_{F^*}) = F^* +  Z_{F^*}$ is $\lambda$-vertex generated and it is summed with zonotopes, so by Theorem \ref{thm:sum} the face $F_u(Q)$ is $\lambda$-\VG~as well. This completes the proof.  
	\end{proof}

We proceed with proving Proposition \ref{prop:sum_of_generic_VG}, namely that the sum of a generic pair (see Definition \ref{def:gen-pair}) of centrally symmetric $\lambda$-\VG~polytopes is  $\lambda$-\VG~as well.

We remark that we can relax the condition that the polytopes are centrally symmetric and assume instead that the polytopes satisfy the conditions asserted in  Lemma \ref{lemma:csVG}, see Remark \ref{rem:weaker_than_Cen_Sym}.

\begin{proof}[Proof of Proposition \ref{prop:sum_of_generic_VG}]
Since $P+Q$ is centrally symmetric, by Lemma \ref{lemma:csVG} it suffices to show that its facets are $\lambda$-\VG, which is equivalent to $\partial(P+Q)\subseteq (1-\lambda)(P+Q)+ \lambda V(P+Q)$.   
  Given $x\in \partial (P+Q)$, there exists a unit vector $u\in S^{n-1}$ such that 
		$x\in F_u(P+Q) = F_u(P) + F_u(Q)$ (by  \eqref{eq:Fu_sum}, as usual).
	%
		 Let $x=x_1+x_2$ where $x_1\in F_u(P)$ and $x_2\in F_u(Q)$. As $P,Q$ are $\lambda$-vertex generated, so are their faces, (see Lemma \ref{lem:fac}) namely there exist $v_1\in V(F_u(P))$, $y_1\in F_u(P)$, $v_2\in V(F_u(Q)$ and $y_2\in F_u(Q)$ 
		 such that  $x_1=(1-\lambda) y_1 + \lambda v_1$ and 		 
		  $x_2=(1-\lambda) y_2 + \lambda v_2$. 
		  Clearly 
		    $u\in N_P(v_1)\cap N_Q(v_2)$, meaning in particular that $N_P(v_1)\cap N_Q(v_1)\ne \emptyset$. Since $P,Q$ are assumed to be a generic pair, this implies that $\intr(N_P(v_1))\cap \intr(N_Q(v_1))\ne \emptyset$. In such a case it is easy to check (see e.g.~\cite{Brvnk_book}*{Chapter 6, Lemma 1.3}) that  $v_1+v_2\in V(P+Q)$ (as they are the Minkowski sum of the faces of $P$ and $Q$ in direction $w\in \intr(N_P(v_1))\cap \intr(N_Q(v_1))$, say). We see thus that 
		\[ x=(1-\lambda) (y_1 + y_2) + \lambda (v_1+v_2) \in (1-\lambda) (P+Q)+\lambda V(P+Q).\] 
	This shows that $\partial(P+Q)\subseteq (1-\lambda) (P+Q)+\lambda V(P+Q)$, and by Lemma \ref{lemma:csVG}
	  $P+Q$ is $\lambda$-vertex generated.
	\end{proof}

 In the remainder of this section we focus on $\lambda = 1/2$. 
The idea behind our next theorem is based on the following attempt to generalize the notion of \VG~polytopes. What would happen if we asked for a polytope to satisfy, instead of $P+ V(P) = 2P$, the relation 
\[ P- V(P) = P- P.\]
 Clearly there is an inclusion of the left hand side in the right hand side, and in the special case of centrally symmetric polytopes, this is yet again the definition of a \VG~polytope. It turns out that in fact there is no other instance where this equality can hold. In fact, much more can be said. The mere requirement that $P-V(P)$ is a convex set, already implies that $P$ is centrally symmetric and \VG. Moreover, the operation $K\mapsto -K$ can be replaced in this claim by any $K\mapsto AK$ for any $A\in GL_n(\RR)$ such that $A^k=Id$ for some positive integer $k$.

\begin{theorem}\label{thm:PplusAP}
	Let $P\sub \R^n$ be a polytope and let $A\in GL_n(\RR)$ satisfy $A^k=Id$ for some natural number $k>0$. Then the following statements are equivalent:	\begin{enumerate}
		\item\label{1d} $P+V(AP)$ is convex
		\item\label{2d} $P$ is \VG~ and $P=AP+x$ for some $x\in\RR^n$.  
	\end{enumerate}
	In particular, if $P-P=P-V(P)$ then $P$ is a centrally symmetric vertex generated polytope. 
\end{theorem}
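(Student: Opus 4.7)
The plan is to establish the easy direction (\ref{2d})$\Rightarrow$(\ref{1d}) by direct substitution and devote the main effort to (\ref{1d})$\Rightarrow$(\ref{2d}). Note first that (\ref{1d}) is equivalent to the equality $P+V(AP)=P+AP$, since $\conv(V(AP))=AP$ and convex hull commutes with Minkowski sum. For (\ref{2d})$\Rightarrow$(\ref{1d}): if $P=AP+x$ then $V(AP)=V(P)-x$, so $P+V(AP)=P+V(P)-x=2P-x$ using the \VG~property, and this is convex.

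For the main direction, the strategy is to extract both the combinatorial and the metric structure of $AP$ relative to $P$ from the covering identity $P+AP=\bigcup_{w\in V(AP)}(P+w)$. First, for each vertex $x=v+w$ of $P+AP$ (with $v\in V(P)$, $w\in V(AP)$), the translate $P+w$ is the unique one among $\{P+w':w'\in V(AP)\}$ containing $x$: choosing $u$ that exposes $x$, the strict inequality $\iprod{u}{w'}<h_{AP}(u)$ for $w'\ne w$ gives $\iprod{u}{x-w'}>h_P(u)$, hence $x-w'\notin P$. Locally near $x$ therefore $P+AP$ coincides with $P+w$, so the tangent cones satisfy $T_P(v)+T_{AP}(w)=T_P(v)$, i.e., $N_P(v)\subseteq N_{AP}(w)$. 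This says the normal fan of $P$ refines that of $AP$; since both fans have $|V(P)|=|V(AP)|$ maximal cones, they must coincide, and the assignment $v\mapsto w(v)$ defined by $N_{AP}(w(v))=N_P(v)$ is a bijection $V(P)\to V(AP)$.

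The quantitative step compares corresponding edge lengths. For each codimension-one cone $C$ in the common fan, the edges $[v_1,v_2]$ of $P$ and $[w(v_1),w(v_2)]$ of $AP$ are collinear with the same orientation, so $w(v_2)-w(v_1)=\mu_C(v_2-v_1)$ with $\mu_C>0$. The associated edge of $P+AP$ is the segment $[v_1+w(v_1),v_2+w(v_2)]$ of length $(1+\mu_C)|v_2-v_1|$; parametrizing it by $s\in[0,1]$, one checks that $P+w(v_1)$ covers the portion $s\in[0,\tfrac{1}{1+\mu_C}]$, $P+w(v_2)$ covers $s\in[\tfrac{\mu_C}{1+\mu_C},1]$, and no other $P+w'$ meets the edge (by a support-hyperplane argument analogous to the vertex case). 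Full coverage therefore forces $\mu_C\le 1$.

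Finally, iterate using $A^k=\mathrm{Id}$: applying $A^i$ to the hypothesis gives $A^iP+V(A^{i+1}P)=A^iP+A^{i+1}P$, so the same argument yields $\mu_C^{(i)}\le 1$ for the analogous length ratio along $C$ going from $A^iP$ to $A^{i+1}P$. Their telescoping product equals $1$ since $A^kP=P$, so every $\mu_C=1$. Thus corresponding edges of $P$ and $AP$ are equal as vectors; integrating along paths in the $1$-skeleton from a fixed base vertex $v_0$ gives $w(v)=v+x$ for every $v\in V(P)$ with $x=w(v_0)-v_0$, hence $V(AP)=V(P)+x$ and $AP=P+x$. Substituting back into $P+V(AP)=P+AP$ yields $P+V(P)=2P$, so $P\in{\rm VG}(\R^n)$. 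The ``in particular'' statement then follows by applying the theorem with $A=-\mathrm{Id}$, $k=2$. The most delicate point is the quantitative inequality $\mu_C\le 1$, and the crucial use of the finite order of $A$ enters through the telescoping that forces equality.
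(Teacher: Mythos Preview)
Your proof is correct and takes a genuinely different route from the paper's. Both arguments hinge on the finite order of $A$ to close a cycle of non-strict inequalities into equalities, but they organize the geometry differently.

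The paper works ``from the inside'': it first proves a face lemma (if $P+V(Q)=P+Q$ then $F_u(P+Q)=F_u(P)+V(F_u(Q))$, forcing $\dim F_u(P+Q)=\dim F_u(P)$), and from this deduces $\dim F_{Au}(P)=\dim F_u(P)$ via the cycle $u,Au,\dots,A^ku=u$. It then upgrades dimension equality to affine-hull equality, then to equality of one-dimensional face lengths (again via a cycle), establishes separately that $A$ permutes the normal cones of faces and preserves face inclusions, and finally runs a somewhat delicate case analysis (ruling out that two parallel edges sharing a vertex are consecutive rather than identical) to show that the translation vector is the same across all vertices.

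Your argument works ``from the outside'' via normal fans. The local-coincidence observation at each vertex of $P+AP$ immediately gives $N_P(v)\subseteq N_{AP}(w)$, and the counting $|V(P)|=|V(AP)|$ turns the refinement into an equality of fans in one stroke---replacing the paper's separate dimension, affine-hull, and normal-cone steps. Your explicit coverage computation on edges yields $\mu_C\le 1$ directly, and the telescoping product does the same job as the paper's chain of length inequalities. The final ``integration along the $1$-skeleton'' is cleaner than the paper's consecutive-segment argument.

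What each approach buys: your normal-fan route is shorter and more conceptual, and avoids the auxiliary face lemma. The paper's approach, on the other hand, keeps track of faces of all dimensions along the way and makes explicit the intermediate structural facts (e.g.\ $A$ permutes normal cones of arbitrary faces), which may be of independent interest. Both are complete; yours is the more economical path to the stated theorem.
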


To prove Theorem \ref{thm:PplusAP}, we  need the following simple lemma.
\begin{lemma}\label{lem:face_sum_vg} Let
	$P,Q\sub\RR^n$ be two polytopes  and let $u\in S^{n-1}$. Suppose   that $P+V(Q)=P+Q$. Then  $$F_u(P+Q)=F_u(P)+V(F_u(Q))$$
	and, in particular, $\dim(F_u(P+Q))=\dim(F_u(P))$.
\end{lemma}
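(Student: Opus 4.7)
The plan is to apply the Minkowski sum face identity \eqref{eq:Fu_sum}, giving $F_u(P+Q) = F_u(P) + F_u(Q)$, and combine it with the hypothesis $P+V(Q) = P+Q$ via a direct support-function calculation. The inclusion $F_u(P) + V(F_u(Q)) \sub F_u(P+Q)$ is immediate from \eqref{eq:Fu_sum} together with $V(F_u(Q)) \sub F_u(Q)$, so the main content is the reverse inclusion.

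For that, I would take $x \in F_u(P+Q) \sub P+Q$ and use the hypothesis to write $x = p + v$ with $p \in P$ and $v \in V(Q)$. Comparing $\iprod{u}{x} = h_P(u) + h_Q(u)$ with the individual inequalities $\iprod{u}{p} \le h_P(u)$ and $\iprod{u}{v} \le h_Q(u)$, equality must hold in each, so $p \in F_u(P)$ and $v \in F_u(Q)$. Since a vertex of $Q$ that lies on a face of $Q$ is automatically a vertex of that face, $v \in V(F_u(Q))$, and hence $x \in F_u(P) + V(F_u(Q))$, establishing the identity.

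The dimension statement follows from the identity. Writing $F_u(P+Q) = \bigcup_{v \in V(F_u(Q))} (F_u(P)+v)$ displays the face as a finite union of translates of $F_u(P)$, each having affine dimension $\dim F_u(P)$. The lower bound $\dim F_u(P+Q) \ge \dim F_u(P)$ is immediate from a single translate. The matching upper bound is the step I expect needs the most care (though it is standard): $F_u(P+Q)$ is convex and hence has nonempty relative interior in its own affine hull; if its dimension exceeded $\dim F_u(P)$, then each of the finitely many affine hulls of $F_u(P)+v$ would be a proper affine subspace of $\text{aff}(F_u(P+Q))$, and a finite union of such proper subspaces cannot cover an open subset of a strictly higher-dimensional affine space. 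Thus $\dim F_u(P+Q) = \dim F_u(P)$.
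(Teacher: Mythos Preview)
Your proof is correct and follows essentially the same approach as the paper: both use \eqref{eq:Fu_sum} for the easy inclusion and then, for the reverse, decompose a point $x\in F_u(P+Q)$ as $p+v$ with $p\in P$, $v\in V(Q)$ via the hypothesis and force $p\in F_u(P)$, $v\in V(F_u(Q))$ through the support-function identity $h_{P+Q}(u)=h_P(u)+h_Q(u)$. The paper leaves the dimension statement as an immediate consequence of the identity, whereas you spell out the finite-union-of-translates argument; your extra justification is correct and does no harm.
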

\begin{proof}
	First note that by our assumption and \eqref{eq:Fu_sum}, 
	\[F_u(P+V(Q))=F_u(P+Q)=F_u(P)+F_u(Q)\supseteq F_u(P)+V(F_u(Q)).\]
To prove the reverse inclusion, let $x\in F_u(P+V(Q))$. By definition, there exist $y\in P$ and $v\in V(Q)$ such that $x=y+v$ and  $\langle u,x\rangle=h_{P+Q}(u)$. By the additivity of the support function with respect to Minkowski addition (see e.g., \cite{Sch_book}*{Theorem 1.7.5}), we therefore have 
	\[h_P(u)+h_Q(u)=h_{P+Q}(u)=\langle u,x\rangle =\langle u,y\rangle+\langle u,v\rangle.\]
Since  $\langle u,y\rangle\le h_P(u)$ and $\langle u,v\rangle\le h_Q(u)$, it follows that  $\langle u,y\rangle=h_P(u)$ and $\langle u,v\rangle= h_Q(u)$.  We thus conclude that, $y\in F_u(P)$ and $v\in V(F_u(Q))$, which completes our proof. 
\end{proof}

\begin{proof}[Proof of Theorem \ref{thm:PplusAP}]
	First note that \eqref{2d} trivially implies \eqref{1d}. We proceed to prove that \eqref{1d} implies \eqref{2d}. 
	Since $P+V(AP)$ includes all the extremal points of  $P + AP$, it follows that  $\conv(P+V(AP))=P+AP$, and so $P+V(AP)$ is convex implies that $P+V(AP)=P+AP$.  
	
We next show  that  for any $ u\in S^{n-1}$, $\dim (F_{Au}(P))=\dim (F_u(P))$. Indeed, using that $P+AP=P+V(AP)$, that  $F_{Au}(AP)=AF_u(P)$ and  Lemma \ref{lem:face_sum_vg}, we have 
\begin{multline}\label{dimansion}
\dim (AF_{u}(P))\le \dim(F_{Au}(P)+AF_{u}(P))=\dim (F_{Au}(P+AP))=	\dim (F_{Au}(P)).
\end{multline}
Therefore, $\dim(F_u(P))\le\dim(F_{Au}(P))$.
Since  \eqref{dimansion} holds for any direction $u$, using our assumption that $A^k = I$,  we  see that
	\begin{equation}\label{recursion}
		\dim (F_u(P))\le\dim (F_{Au}(P))\le\dots\le\dim(F_{A^ku}(P))=\dim(F_{u}(P)),
	\end{equation}
and so $\dim (F_{u}(P))=\dim (F_{Au}(P))$, as claimed, and the inequality in \eqref{dimansion} is in fact an equality.

Note that we can also infer 
that $\aff(F_{Au}(P))$ is a translate of $\aff(AF_u(P))$ since the equality $\dim(F_{Au}(P)+AF_{u}(P))= \dim (AF_{u}(P))$  implies that $\aff(F_{Au}(P))$ is a subset of a translate of $\aff (AF_{u}(P))$ and hence the equality  $\dim (AF_{u}(P))=\dim (F_{Au}(P))$  implies that their affine hulls coincide.
	
	Next we show that if $\dim(F_{u}(P))=1$ then $\vol_1(F_{Au}(P)) = \vol_1(F_u(P))$. Indeed, 
	 if $\dim(F_{u}(P))=1$     
	 then,  by \eqref{dimansion},  $\dim(F_{Au}(P))=\dim(F_{Au}(P+AP))=1$.
	In particular, since $F_{Au}(P+AP)=F_{Au}(P)+AF_u(P)$, it follows that
	  $$\volk{1} (F_{Au}(P+AP))=\volk{1}(F_{Au}(P))+\volk{1}(AF_u(P)).$$
Moreover, by Lemma \ref{lem:face_sum_vg}, we have (since $F_{Au}(AP)$ is one-dimensional)
$$
\volk{1}(F_{Au}(P+AP))=\volk{1}(F_{Au}(P)+V(F_{Au}(AP)))\le 2\volk{1}(F_{Au}(P))
$$
and hence $ \volk{1}(F_u(P))\le \volk{1}(F_{Au}(P))$ for any $u$.  As in \eqref{recursion}, using the fact that 
$A^k=Id$, we obtain that  $\vol_1(F_{Au}(P)) = \vol_1(F_u(P))$.
	
So far, we have established  that for any $u\in S^{n-1}$ such that $\dim(F_u(P))=1$,  $AF_u(P)$ and $F_{Au}(P)$ are translates of one another. Next, we prove the following claim:
\begin{equation}\label{eq:Anormal_cone}
\forall F\in{\mathcal F}(P)\,\,\exists G\in{\mathcal F}(P)\text{ such that } An_P(F)=n_P(G).
\end{equation}
 Indeed, first note that for every $u\in\relint(n_P(F))$, we have $F=F_u(P)$ and that, by definition, the finite set
$
\{F_{Au}\,:\,u\in\relint(n_P(F))\}:=\{F_1,\dots,F_k\}
$
satisfies  that $$A\relint(n_P(F))\sub\bigcup_{i=1}^k\relint(n_P(F_i)).$$
As shown in \eqref{recursion} we know that $\dim(F)=\dim(F_i)$ for all $i$, which means ${\rm span}(An_P(F))={\rm span}(n_P(F_i))$.  However, in this spanned subspace (which is simply $(AF)^\perp$) the sets  given by
$\relint(n_P(F_1)),\dots, \relint(n_P(F_k))$ are pairwise disjoint and open  (relative to the subspace). Since $A\relint(n_P(F))$ is a connected set, it cannot be covered by disjoint open sets. Therefore  $F_1=\dots =F_k$. In other words,  $A\relint(n_P(F))\sub\relint(F_{Au})$ for every  $u\in\relint(n_P(F))$. Applying  $A$ and using this inclusion repeatedly, we obtain 
$$A^{k+1}\relint(n_P(F))\sub A^{k}n_P(F_{Au}(P))\sub A\relint(n_P(F_{A^ku}(P))).$$
Since $A^k=I$ and $F=F_u(P)$, it  follows that $A\relint n_P(F)=\relint n_P(F_{Au}(P))$ and hence (as normal cones are closed) $An_P(F)=n_P(F_{Au}(P))$, as claimed. 

Next, we claim  that
\begin{equation}\label{eq:FsubAsub}
F_w(P)\sub F_u(P)\implies F_{Aw}(P)\sub F_{Au}(P).
\end{equation} 
Indeed, suppose that $u\in n_P(F_u(P))\sub n_P(F_w(P))$. By applying $A$ on both sides, we get 
$$Au\in An_P(F_u(P))\sub An_P(F_w(P)).$$  
By \eqref{eq:Anormal_cone},   $An_P(F_u(P))=n_P(G)$ and $An_P(F_w(P))=n_P(G')$ for some faces $G,G'$ of $P$ with $G'\sub G$. Therefore , it follows that 
$n_P(F_{Au}(P))\sub n_P(G)$ and $n_P(F_{Aw}(P))\sub n_P(G')$. Since, by \eqref{recursion}, $\dim G=\dim F_u(P)=\dim F_{Au}$, we have $F_{Au}(P)=G$, and similarly $F_{Aw}(P)=G'$.  Thus, we obtain that $F_{Aw}(P)=G'\sub G= F_{Au}(P)$, as claimed. 

Let $F_u$ be a $1$-dimensional face of $P$ and let $F_v$ one of its vertices. By \eqref{recursion} and \eqref{eq:FsubAsub}, $F_{Au}$ is also a $1$-dimensional face of $P$ with $F_{Av}$ as one of its vertices.  Moreover, as we already established, $F_{Au}=AF_u+x_u$ for some $x_u\in\RR^n$, and (trivially) $F_{Av} =AF_v+x_v$ for some $x_v\in\RR^n$. Our goal is to show that $x_u=x_v$, from which it readily follows that $V(AP)=V(P)+x$ for some $x\in\RR^n$ (as all vertices are connected via $1$-dimensional faces). Indeed, denote $E=F_u(P)$ and $V=F_v(P)$.  Note that $AE+x_u$ and $AE+x_v$ are parallel line segments of the same length and with a common vertex {$AV+x_v$} (as $F_{Av}$ is a vertex of $F_{Au}$). Therefore,  these segments are either  identical, namely $x_u=x_v$ or consecutive so that their union $T$ is a segment and their intersection is {$AV+x_v$}. Assume the latter.  In particular, we have $AV+x_v\in\relint(T)$.  Denote $H=(Av)^\perp$ and $H^-=\{x\in\RR^n\,:\,\iprod{x}{Av}\le0\}$. Since $Av$ is a normal of $P$ at the vertex $AV+x_v$ and $AE+x_u$ is a face of $P$, we have $AE+x_u\sub H^-+AV+x_v$. On the other hand, clearly $F_{Av}(AP+x_v)=AV+x_v$ and $AE+x_v$ is a face of $AP+x_v$, which means that $AE+x_v\sub H^-+AV+x_v$ and so $T\sub H^-+AV+x_v$. However, since $AV+x_v\in\relint(T)$, it follows that $Av$ must be orthogonal to $T$, which contradicts the fact that $\dim(F_{Av})=0$. Thus, we have $x_u=x_v$.

Concluding the above, we have $V(AP)=V(P)+x$ for some $x\in\RR^n$, and so clearly $AP=P+x$.
Since  $P+V(AP)$ is assumed to be convex and $V(P)=V(AP)$,  $P$ must also be vertex generated, which completes our proof.
\end{proof}

\section{A series expansion and covering estimates}\label{series}
In this section, we discuss two more  properties of $\lambda$-\VG~polytopes which are straightforward from the definition, and are the reason for our choice of name for this class.

We begin with the property, explained in the introduction, characterizing members of the class  $ {\rm VG}(\RR^n)$ as polytopes $P$ that can be written as the closure of a certain infinite sum involving the vertices of $P$. 

\begin{proposition}\label{prop:infinite-sum}
 Let $n\ge 1$ and $\lambda \in [1/(n+1), 1/2]$.  Let $P\sub \RR^n$ be a polytope. Then $P\in {\rm VG}(\RR^n, \lambda)$ if and only if 
 \[ 
 P = \cls\left({\sum_{i=0}^\infty (1-\lambda)^{i} \lambda V(P)}\right). \]   
\end{proposition}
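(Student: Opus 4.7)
The plan is to prove the equivalence by iterating the defining relation $P = (1-\lambda)P + \lambda V(P)$ and controlling a geometric remainder term, together with a straightforward closure argument in the reverse direction.

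For the forward implication, assume $P \in {\rm VG}(\RR^n,\lambda)$, so that $P = (1-\lambda)P + \lambda V(P)$. Fix $x \in P$. The plan is to recursively peel off a vertex: write $x = \lambda v_1 + (1-\lambda) x_1$ with $v_1 \in V(P)$ and $x_1 \in P$, then apply the same decomposition to $x_1$, and so on, obtaining sequences $(v_i)_{i \ge 1} \subset V(P)$ and $(x_N)_{N \ge 0} \subset P$ (with $x_0 = x$) such that for every $N \ge 1$,
\[ x = \sum_{i=1}^{N} (1-\lambda)^{i-1} \lambda v_i + (1-\lambda)^{N} x_N. \]
Since $P$ is bounded and $0 < 1-\lambda < 1$, the remainder $(1-\lambda)^N x_N$ tends to $0$, and therefore the partial sums converge to $\sum_{i=1}^\infty (1-\lambda)^{i-1}\lambda v_i$, a point which lies in the set $S := \sum_{i=0}^\infty (1-\lambda)^i \lambda V(P)$. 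Thus $x \in \cls(S)$, giving $P \subseteq \cls(S)$.

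For the reverse inclusion $\cls(S) \subseteq P$, observe that for any choice of vertices $v_i \in V(P)$, the truncated sums $\sigma_N = \sum_{i=0}^{N-1}(1-\lambda)^i \lambda v_i$ can be renormalized as convex combinations $\sigma_N / (1-(1-\lambda)^N)$ of points in $V(P) \subseteq P$; these renormalized points lie in $P$, and their limit as $N \to \infty$ equals $\sum_{i=0}^\infty (1-\lambda)^i\lambda v_i$ (the renormalization factor tends to $1$). Since $P$ is closed, every point of $S$ lies in $P$, and hence so does $\cls(S)$. Combined with the previous paragraph this gives $P = \cls(S)$.

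For the reverse implication, assume $P = \cls(S)$. Any element of $S$ has the form $\sum_{i=0}^\infty (1-\lambda)^i \lambda v_i$, and the shift identity
\[ \sum_{i=0}^\infty (1-\lambda)^i \lambda v_i = \lambda v_0 + (1-\lambda)\sum_{i=0}^\infty (1-\lambda)^i \lambda v_{i+1} \]
shows directly that $S \subseteq \lambda V(P) + (1-\lambda) S$. Taking closures and using that $V(P)$ is finite (so $\lambda V(P)$ is compact) and Minkowski sum of a compact set with a closed set is closed, we obtain $P = \cls(S) \subseteq \lambda V(P) + (1-\lambda)\cls(S) = \lambda V(P) + (1-\lambda)P$. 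The opposite inclusion $(1-\lambda)P + \lambda V(P) \subseteq P$ is immediate from $V(P) \subseteq P$ and convexity of $P$, so $P = (1-\lambda)P + \lambda V(P)$, i.e.\ $P \in {\rm VG}(\RR^n,\lambda)$.

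The only delicate point is the bookkeeping involving closure and convergence: the set $S$ is an infinite Minkowski sum whose summands have geometrically decaying diameter, so convergence of the partial sums is uniform, and the renormalization argument establishing $S \subseteq P$ is the cleanest way to avoid appealing to a general statement about infinite Minkowski sums.
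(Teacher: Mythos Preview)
Your proof is correct and follows essentially the same route as the paper's: iterate the relation $P=(1-\lambda)P+\lambda V(P)$ to peel off a geometrically decaying sum of vertices (the paper does this at the level of sets, you do it pointwise for each $x$), use convexity and closedness of $P$ to get $S\subseteq P$, and for the converse use the shift identity together with closedness of $\lambda V(P)+(1-\lambda)P$. One small remark: in your forward direction you actually show $x\in S$ (the series you build converges to $x$ itself), so the closure is not even needed there; and for the final closedness step the paper simply observes that $\lambda V(P)+(1-\lambda)P$ is a finite union of closed sets, which is marginally simpler than invoking ``compact $+$ closed is closed''.
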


\begin{proof}
For any polytope, using that $P$ is closed and convex and that 
${\sum_{i=0}^\infty (1-\lambda)^{i} \lambda } = 1$ we have
	\begin{equation}\label{eq:simple-convexity}
		{\sum_{i=0}^\infty (1-\lambda)^{i} \lambda V(P)} \subseteq 
  P
	\end{equation}
	so that the inclusion  $P \supseteq  \cls\left({\sum_{i=0}^\infty (1-\lambda)^{i} \lambda V(P)}\right)$ holds without any assumptions. We thus need to show that the opposite inclusion holds if and only if $P\in {\rm VG}(\RR^n, \lambda)$.
	
	 Assume $P\in {\rm VG}(\RR^n, \lambda)$. Let $R>0$ be such that $P\sub RB_2^n$.  
 Using that $P\in {\rm VG}(\RR^n, \lambda)$ we see inductively that for every $k\in {\mathbb N}$ it satisfies 
 $P = \sum_{i=0}^k (1-\lambda)^{i} \lambda V(P)+ (1-\lambda)^{k+1}P $.
 Given $x \in P$ and $\epsl>0$, we choose $k\in {\mathbb N}$ such that $R(1-\lambda)^{k+1}< {\epsl}/{2}$. Then 
\begin{equation}
    P = \sum_{i=0}^k (1-\lambda)^{i} \lambda V(P)+ (1-\lambda)^{k+1}P \subseteq \sum_{i=0}^k (1-\lambda)^{i} \lambda V(P) +\frac \epsl2 B_2^n.
\end{equation}
Therefore there exists $v_1 \in \sum_{i=0}^k (1-\lambda)^{i} \lambda V(P)$ such that $|v_1-x|\le \epsl/2$.
From \eqref{eq:simple-convexity} we know that 
\begin{equation}
    \sum_{i=k+1}^\infty (1-\lambda)^{i} \lambda V(P)=(1-\lambda)^{k+1}{\sum_{i=0}^\infty  (1-\lambda)^{i} \lambda V(P)}\subseteq (1-\lambda)^{k+1}P\subseteq \frac \epsl2 B_2^n.
\end{equation}
Therefore for any $v_2 \in \sum_{i=k+1}^\infty (1-\lambda)^{i} \lambda V(P)$ we have that $|v_2|<\frac \epsl2$.
Thus we may conclude that 
\begin{equation}
    d(x,\sum_{i=0}^\infty  (1-\lambda)^{i}\lambda V(P))\le |(v_1+v_2)-x|\le |v_1-x|+|v_2|<\epsl,
\end{equation}
proving the inclusion  $P \subseteq  \cls\left({\sum_{i=0}^\infty (1-\lambda)^{i} \lambda V(P)}\right)$. 

For the other direction, assume $P = \cls\left({\sum_{i=0}^\infty (1-\lambda)^{i} \lambda V(P)}\right)$. Write
\begin{eqnarray*}
    P &=& \cls\left({\sum_{i=0}^\infty (1-\lambda)^{i} \lambda V(P)}\right) 
   = \cls \left(  \lambda  V(P)+ (1-\lambda) \sum_{i=0}^\infty (1-\lambda)^{i} \lambda V(P)
      \right)
     \\
   &  \subseteq&\cls\left(\lambda V(P)+(1-\lambda)  P\right)=\lambda V(P)+(1-\lambda)  P, 
\end{eqnarray*}
where the last equality is a result of $\lambda V(P)+(1-\lambda)  P$ being a finite union of closed sets. Since the opposite inclusion 
$\lambda V(P)+(1-\lambda)  P\subseteq P$, holds trivially (for any polytope) it follows that $P\in {\rm VG}(\RR^n, \lambda)$. 
\end{proof}
\begin{remark}
    It is instructive to note that when $P$ is the standard simplex in $\RR^2$, the set $\cls\left({\sum_{i=1}^\infty 2^{-i} V(P)}\right)=\bigcap_k(\sum_{i=1}^k 2^{-i}V(P)+ 2^{-k}P)$ is the Sierpinski triangle. For other $P\not\in {\rm VG}(\RR^n)$ one may obtain other fractal-like objects.
\end{remark}

Another useful fact about the class of $\lambda$-\VG~polytopes is that it has good covering properties. This should not come as a surprise, as the definition of the class itself is that $|V(P)|$ copies of $(1-\lambda)P$ form a cover for $P$. 
Recall the notion of covering numbers.

\begin{definition}[Covering numbers]
	Let $K$ and $T$ be convex bodies in $\R^n$, The Covering number $N(K,T)$ of $K$ by $T$ is defined as follows:
	\[N(K,T):=\min\left\{N \in \mathbb{N} \mid\exists x_1,\dots,x_N \in \R^n : K \subseteq \bigcup_{i=1}^N x_i+T \right\}.\]
\end{definition}
The results of this paper imply a simple covering estimate for the rich class of vertex generated polytopes, which is in the spirit of Maurey's lemma.
\begin{lemma}[Maurey's lemma, \cite{Pisier81}*{Lemma 2}]
	Let $X$ be a space with type $p$, and unit ball $T$. Let $m$ be an integer and assume that $P$ is the convex hull of $m$ points in $T$. Then for any integer $k$
	\[N(P,2k^{-1/q}T_p(X)T)\le m^k\]
	where $q$ is the conjugate of $p$, that is $1/p+1/q=1$.
\end{lemma}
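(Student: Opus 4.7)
The plan is to use Maurey's classical \emph{empirical method}: given a point in $P$, approximate it by the average of $k$ i.i.d.\ random samples drawn from the vertices according to the barycentric coordinates, then use type-$p$ to control the expected approximation error, and finally count that there are only $m^k$ possible sample averages.

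In more detail, given $x\in P$, write $x=\sum_{i=1}^m \lambda_i x_i$ with $\lambda_i\ge 0$ and $\sum\lambda_i=1$. Let $Y_1,\ldots,Y_k$ be i.i.d.\ random variables taking values in $\{x_1,\ldots,x_m\}$ with $\mathbb{P}(Y_j=x_i)=\lambda_i$, so that $\mathbb{E}[Y_j]=x$, and set $Z=\frac{1}{k}\sum_{j=1}^k Y_j$, which satisfies $\mathbb{E}[Z]=x$. Introducing an independent copy $(Y_j')$ of $(Y_j)$ and independent Rademacher signs $(\epsilon_j)$, Jensen's inequality and the standard symmetrization trick give
\[
\mathbb{E}\|Z-x\|
=\mathbb{E}\Bigl\|\tfrac{1}{k}\sum_{j=1}^k (Y_j-\mathbb{E}Y_j')\Bigr\|
\le \mathbb{E}\Bigl\|\tfrac{1}{k}\sum_{j=1}^k (Y_j-Y_j')\Bigr\|
=\mathbb{E}\Bigl\|\tfrac{1}{k}\sum_{j=1}^k \epsilon_j(Y_j-Y_j')\Bigr\|
\le \tfrac{2}{k}\,\mathbb{E}\Bigl\|\sum_{j=1}^k \epsilon_j Y_j\Bigr\|,
\]
where the middle equality uses the symmetry in distribution of $Y_j-Y_j'$. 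Conditioning on $(Y_j)$ and applying the defining inequality of type $p$ to the fixed vectors $Y_1,\ldots,Y_k$, together with $\|Y_j\|\le 1$ a.s., yields $\mathbb{E}\|\sum_j \epsilon_j Y_j\|\le T_p(X)\,k^{1/p}$, and hence $\mathbb{E}\|Z-x\|\le 2\,T_p(X)\,k^{-1/q}$.

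This probabilistic bound then forces, for every $x\in P$, the existence of at least one deterministic tuple $(y_1,\ldots,y_k)\in\{x_1,\ldots,x_m\}^k$ whose average $\frac{1}{k}\sum_j y_j$ lies in $x+2k^{-1/q}T_p(X)\,T$. The collection of all such averages has cardinality at most $m^k$, so these points serve as centers for a cover of $P$ by $m^k$ translates of $2k^{-1/q}T_p(X)\,T$, as claimed. The main subtlety — not really an obstacle but the one step that requires care — is the symmetrization chain, which converts the centered average $\frac{1}{k}\sum(Y_j-\mathbb{E}Y_j)$ into the Rademacher-weighted sum $\frac{1}{k}\sum\epsilon_j Y_j$ for which the type-$p$ hypothesis is directly applicable.
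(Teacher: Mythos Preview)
Your argument is correct and is precisely the standard empirical-method proof of Maurey's lemma. Note, however, that the paper does not supply its own proof of this statement: the lemma is quoted from \cite{Pisier81}*{Lemma 2} as background, and the paper only remarks that ``the proof of Maurey's lemma uses averages of the vertices of the polytope'' before moving on to its own (different) covering estimate for vertex-generated polytopes. Your write-up matches that one-line description and the original source.
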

The proof of Maurey's lemma uses averages of the vertices of the polytope. In the case of \VG-polytopes, we can similarly provide a net using weighted averages (indeed, this is almost the definition of \VG). 

Recall that $\lambda(P) = \sup \{\lambda\,:\, P=(1-\lambda) P+\lambda V(P)\}$, and denote $a(P)=1-\lambda(P)$.
\begin{proposition}
If  $P\sub \R^n$ is a polytope with $m$ vertices then for any $k\in { \mathbb N}$,
		\[N\left(P,a(P)^kP\right)\le m^k.\]
	\end{proposition}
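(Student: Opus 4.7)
The plan is to obtain the covering estimate by iterating the defining identity for $\lambda$-vertex-generated polytopes, and then to count the points that arise in the iterated Minkowski sum.

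First, I would argue that the supremum in the definition of $\lambda(P)$ is attained, so that setting $\lambda = \lambda(P)$ and $a = a(P) = 1 - \lambda$ we have the exact equality
\[ P = aP + \lambda V(P). \]
The existence of such a $\lambda$ is easily established: if $P = (1-\mu)P + \mu V(P)$ for some $\mu$, then for any $0 < \mu' < \mu$ one can rewrite every decomposition $x = (1-\mu)y + \mu v$ with $y\in P$, $v\in V(P)$ in the form $x = (1-\mu')y' + \mu' v$ with $y' = \frac{1-\mu}{1-\mu'}y + \frac{\mu - \mu'}{1-\mu'}v \in P$. This shows the set of admissible $\mu$ is an interval of the form $[1/(n+1), \lambda(P)]$ and, together with a standard limiting/compactness argument on the support functional level, yields that the supremum is attained.

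Next, I iterate. Substituting $P = aP + \lambda V(P)$ into itself $k$ times yields
\[ P = a^k P + \lambda \sum_{i=0}^{k-1} a^i V(P), \]
where the sum is a Minkowski sum. This is a straightforward induction: assuming the identity for $k$, replace the inner $P$ by $aP + \lambda V(P)$ to obtain it for $k+1$.

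Finally, the right-hand side is a union of translates of $a^k P$ indexed by the points of the finite set $S_k := \lambda \sum_{i=0}^{k-1} a^i V(P)$. Since $|V(P)| = m$, the Minkowski sum $S_k$ has cardinality at most $m^k$ (a Minkowski sum of $k$ sets each of size $m$). Therefore
\[ P = \bigcup_{c \in S_k} \bigl(c + a^k P\bigr), \qquad |S_k| \le m^k, \]
which is exactly the covering statement $N(P, a^k P) \le m^k$. No step here is difficult; the only place that requires care is justifying that the supremum in the definition of $\lambda(P)$ is attained, after which the rest is a mechanical iteration and a counting argument on the Minkowski sum.
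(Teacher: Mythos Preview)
Your proof is correct and follows essentially the same approach as the paper: iterate the identity $P = aP + (1-a)V(P)$ to obtain $P = a^kP + (1-a)\sum_{i=0}^{k-1}a^iV(P)$, and then observe that the finite Minkowski sum on the right has at most $m^k$ points. You are in fact slightly more careful than the paper in pausing to justify that the supremum defining $\lambda(P)$ is attained; the paper simply writes $P = a(P)P + (1-a(P))V(P)$ without comment.
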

	\begin{proof}
Set $a:=a(P)$.  In the same spirit of the proof of Proposition \ref{prop:infinite-sum}, we note that  for any $k\in {\mathbb N}$, we have
		\[P=a  P+ (1-a) V(P)=a^kP+(1-a)\sum_{i=0}^{k-1}a^iV(P).\]
As the number of points in $\sum_{i=0}^{k-1}a^iV(P)$ is at most $m^k$, we get the desired result.
	\end{proof}
\noindent Note that on the one hand, by \eqref{eq:carat}, it holds for any polytope $P\sub\RR^n$ that 
\[N\left(P,\left(\frac{n}{n+1}\right)^kP\right)\le m^k.\]
On the other hand, for a vertex generated polytope $P\in {\rm VG}(\R^n)$ (in particular, for any zonotope $P$), we get the superior covering estimates $N(P, 2^{-k} P)\leq m^k$.

Also note that we have the volume lower bound 
	\[ 
		N(P, 2^{-k} P)\geq \frac {\vol (P)}{\vol (2^{-k}P)} = 2^{nk} = (2^n)^k.
	\]
	Since, by Lemma \ref{lem:vert}, for a \VG~polytope we have $m\ge 2^n$, with equality precisely when $P$ is a parallelopiped, the proposition  can be interpreted as the fact that when the number of vertices of the \VG~polytope is not much larger that $2^n$, the volume lower bound is close to being an equality.

\section{Some concluding remarks}\label{sec:other_stuff}

Let us describe yet another angle from which to approach \VG-polytopes.  Motivated by studying Brunn-Minkowski type inequalities for sums of boundaries \cite{AFS-BMtype}, we show in \cite{AFS-cara} (see also, \cite{Tomer-thesis}) the following theorem for $n$-dimensional polytopes. 

\begin{theorem}\label{thm:bd-and-k} Let $n\ge 1$. 
		For any polytope $P\sub\R^n$ it holds that
		\[P=\frac{\partial^{\lceil\frac n2\rceil}P+\partial ^ {\lfloor\frac n2\rfloor}P}{2}.\]
	\end{theorem}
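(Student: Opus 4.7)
My proof plan goes by induction on $n$. The base case $n=1$ is immediate: $\partial^0 P = P$ and $\partial^1 P = V(P)$, so the claim reduces to $P = (P + V(P))/2$, which is the vertex-generated property that holds for any segment. The case $n=2$ amounts to $P = (\partial P + \partial P)/2$ and follows from the classical midpoint-chord lemma: for any $x\in\intr(P)$, the continuous odd function $\theta\mapsto r_+(\theta)-r_-(\theta)$ on $S^1$, where $r_\pm(\theta)$ denotes the distance from $x$ to $\partial P$ in direction $\pm\theta$, vanishes at some $\theta_0$ by the intermediate value theorem, yielding a chord through $x$ with $x$ as its midpoint and both endpoints on $\partial P=\partial^1 P$.

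For the inductive step, the case where $x$ lies on a proper face $F$ of dimension $k<n$ follows directly from the induction hypothesis applied to $F$: this gives $x=(a+b)/2$ with $a$ in the $\lfloor k/2\rfloor$-skeleton of $F$ and $b$ in the $\lceil k/2\rceil$-skeleton of $F$. Since every face of $F$ is also a face of $P$, and since $k\le n-1$ implies $\lfloor k/2\rfloor\le\lfloor n/2\rfloor$ and $\lceil k/2\rceil\le\lceil n/2\rceil$, one concludes that $a\in\partial^{\lceil n/2\rceil} P$ and $b\in\partial^{\lfloor n/2\rfloor} P$ as required.

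The genuinely hard case is $x\in\intr(P)$. The goal is to produce a chord through $x$, with $x$ as its midpoint, whose two endpoints lie on faces of $P$ of dimensions $\lfloor n/2\rfloor$ and $\lceil n/2\rceil$ respectively. A parameter count is instructive: $\theta\in S^{n-1}$ carries $n-1$ free parameters, the midpoint equation $r_+(\theta)=r_-(\theta)$ consumes one, and constraining each endpoint to a face of the prescribed dimension imposes a total of $(n-1-\lfloor n/2\rfloor)+(n-1-\lceil n/2\rceil)=n-2$ codimension, exactly saturating the remaining freedom in both the even and odd parity cases. The main obstacle is upgrading this borderline count to a rigorous existence statement. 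The most promising avenue is to study the centrally symmetric body $K_x=P\cap(2x-P)$, whose boundary piece $\partial K_x\cap\partial P$ parametrizes all midpoint chords through $x$, and to invoke a topological degree or Borsuk--Ulam--type argument on the natural antipodal involution $y\mapsto 2x-y$ of $\partial K_x$ to force an intersection of the relevant low-dimensional strata. A more combinatorial alternative is to begin with a generic midpoint chord and continuously deform its direction while applying the induction hypothesis on the $(n-1)$-faces whose interiors the endpoints pass through, driving both endpoints down onto faces of the required dimensions.
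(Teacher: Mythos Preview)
The paper does not actually prove this theorem here; it is quoted from the companion work \cite{AFS-cara} (see also \cite{Tomer-thesis}), so there is no in-paper argument to compare against. Evaluated on its own merits, however, your proposal has a genuine gap.

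Your base cases and the boundary step of the induction are fine (modulo a harmless notational swap in the $n=1$ case: you wrote $\partial^0 P=P$ and $\partial^1 P=V(P)$, but $\partial^k$ denotes the union of $k$-dimensional faces, so it is the other way around; the conclusion $P=(P+V(P))/2$ is unaffected). The interior case, on the other hand, is not proved. You correctly isolate the heart of the matter---producing a midpoint chord through $x\in\intr(P)$ whose endpoints land on faces of dimensions $\lfloor n/2\rfloor$ and $\lceil n/2\rceil$---and you give a parameter count that makes existence plausible. But you then stop at two sketches: a Borsuk--Ulam-type argument on $\partial K_x$ with $K_x=P\cap(2x-P)$, and a deformation argument that invokes the induction hypothesis on facets. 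Neither is carried out. Because the dimension count is exactly critical, a naive transversality or genericity argument cannot close the gap; one needs a genuine topological obstruction or a combinatorial substitute. For the first route you would have to specify which equivariant map on $\partial K_x$ you use, what its target is, and why its zero set is forced to meet the relevant strata; for the second you would have to explain why sliding one endpoint onto a lower-dimensional face does not cost you control of the other. As it stands, the interior step is a program, not a proof.
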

Here for a polytope $P \sub\RR^n$, we denoted the union of its $k$-dimensional faces by $\partial^k P$.  
Moreover, we showed that if $2P=\partial^{k}P+\partial^{n-k}P$ for some $k\in \{0, \ldots, \lfloor n/2 \rfloor \}$ then  $2P=\partial^{m}P+\partial^{n-m}P$  for all $k\le m\le  \lfloor n/2 \rfloor$. 

This allows to   define, for every polytope, its ``critical dimension'' $k^*(P)$ which is the smallest $k$ for which 	$2P=\partial^{k}P+\partial^{n-k}P$.  Theorem \ref{thm:bd-and-k} guarantees that $k^*$ exists and is at most $\lfloor{n/2}\rfloor$, see again \cite{AFS-cara}. 
A simplex  $\Delta_n\subset \RR^n$ is an example of a  polytope with largest possible critical dimension, $k^*(\Delta) = \lfloor{n/2}\rfloor$. The class of \VG-polytopes is precisely the class of polytopes for which the critical dimension is $k^* = 0$.

\begin{remark} It is worth mentioning that the $\lambda$-parameter we introduced to measure closeness of a polytope to being \VG~does not have a similar straightforward analogue for the class of polytopes for which  
	  $P=\frac 12(\partial^kP+\partial^{n-k}P)$,  $0<k<n$. Indeed, asking for an identity of the form $P=\lambda\partial^kP+(1-\lambda)\partial^{n-k}P$ to hold, for some $1/2\neq \lambda$, already fails for say $P=[-1,1]^n$ the unit cube. In other words, for the cube $P$, we have that $P=\lambda\partial^kP+(1-\lambda)\partial^{n-k}P$ for a given $0<k<n$ if and only if $\lambda=\frac 12$.
	Indeed, assume without loss of generality that $\lambda>\frac 12$, and let us show that $0\notin \lambda\partial^kP+(1-\lambda)\partial^{n-k}P$.
	Assume this statement is false, so there exist $x\in \partial^kP$ and $y\in \partial^{n-k}P$ such that $\lambda x+(1-\lambda)y=0$, because $x\in \partial P$ there exist $1\le i\le n$ such that $|\langle x,e_i\rangle|=1$, so without loss of generality     assume that $\langle x,e_1\rangle =1$, this means that
	\[ (1-\lambda)\langle y, e_1\rangle=\langle 0, e_1\rangle-\lambda\langle x, e_1\rangle\] and hence
	\[\langle y, e_1\rangle=0-\frac{\lambda}{1-\lambda}\langle x, e_1\rangle<-1,\] 
 which is a contradiction as $y\in [-1,1]^n$.
\end{remark}

When considering higher dimensional boundary parts instead of vertices, one can improve the factor $1/(n+1)$ from \eqref{eq:carat} significantly. The fact that $\partial P$ is connected, together with a classical result of Fenchel, it follows that 
 for every polytope $P\sub \R^n$  
 	\[P=\frac{n-1}{n} P+\frac 1{n}\partial^1P.\]
 Indeed, Fenchel \cite{Fenchel1929} showed that if a set $A$ cannot be separated into two disconnected parts by a hyperplane (which does not intersect $A$), then any point $x\in {\rm conv}(A)$ can be written as the convex hull of $n$ points from $A$. The rest of the proof is just as we showed \eqref{eq:carat}. 

 More generally, B\'ar\'any and Karasev showed \cite{BK12}*{Corollary 2.4},  
 \begin{proposition}\label{general kG}
 	Let $P\sub \R^n$ be a polytope. Then 
 	\[P=\frac{n-k}{n-k+1} P+\frac 1{n-k+1}\partial^kP.\]
 \end{proposition}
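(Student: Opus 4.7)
The plan is to mimic exactly the argument used to establish \eqref{eq:carat}, but to replace classical Carathéodory with the Bárány--Karasev strengthening. Specifically, [BK12, Corollary~2.4] asserts that for any polytope $P\subset\RR^n$ and any $k$, every point $x\in P=\conv(\partial^kP)$ can be expressed as a convex combination of at most $n-k+1$ points of $\partial^kP$. This is the substantive input of the proposition: the combinatorial/topological work (relying on the connectivity of the $k$-skeleton and the polytope analog of Fenchel's theorem) is entirely contained in that reference.

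Given this representation, the rest of the proof is the ``largest coefficient" trick. Fix $x\in P$ and write $x=\sum_{i=1}^{n-k+1}\lambda_i p_i$ with $p_i\in\partial^kP$, $\lambda_i\ge 0$, and $\sum_i\lambda_i=1$. Reorder so that $\lambda_1$ is the largest coefficient, so in particular $\lambda_1\ge \tfrac{1}{n-k+1}$. Then define
\[ y = \frac{n-k+1}{n-k}\left(\Bigl(\lambda_1-\frac{1}{n-k+1}\Bigr) p_1+\sum_{i=2}^{n-k+1}\lambda_i p_i\right). \]
The coefficients inside the parentheses are nonnegative by the choice of $\lambda_1$, and they sum to $1-\tfrac{1}{n-k+1}=\tfrac{n-k}{n-k+1}$, so after multiplying by $\tfrac{n-k+1}{n-k}$ they sum to $1$; hence $y\in\conv(\partial^kP)=P$. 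A direct computation then gives
\[ x=\frac{1}{n-k+1}\,p_1+\frac{n-k}{n-k+1}\,y\in\frac{1}{n-k+1}\partial^kP+\frac{n-k}{n-k+1}P. \]
The opposite inclusion $\tfrac{n-k}{n-k+1}P+\tfrac{1}{n-k+1}\partial^kP\subseteq P$ is immediate from convexity and $\partial^kP\subseteq P$.

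The only genuinely hard step here is the Bárány--Karasev bound on the number of points of $\partial^kP$ required to convex-combine an arbitrary point of $P$; the rest is simply the same coefficient-balancing manoeuvre used to derive \eqref{eq:carat} and the Fenchel-based improvement for $k=1$ cited in the preceding paragraph. Since the proposition is stated as a consequence of that external result, this should be all that is needed.
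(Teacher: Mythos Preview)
Your proposal is correct and matches the paper's approach. The paper does not spell out a proof of this proposition; it simply attributes the statement to B\'ar\'any--Karasev \cite{BK12}*{Corollary~2.4}, having just explained the $k=1$ case via Fenchel's theorem plus the same ``largest coefficient'' trick from \eqref{eq:carat}. Your write-up fills in exactly that implicit argument for general $k$, and the computation is clean.
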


It is worth mentioning that the Shapley-Folkman Lemma is also a generalization of \eqref{eq:carat}, by plugging in $A_i = V(P)$ and $m=n+1$. 

\begin{theorem}[Shapley-Folkman Lemma]\label{thm:SF}
    Let $A_1, \ldots A_m\subset \RR^n$, $m\ge n$ and let 
    $x\in \sum_{i=1}^m {\rm conv} (A_i)$. Then there is some subset $ I = \{ i_1, \ldots , i_n\}\subseteq \{1, \ldots, m\}$ such that 
    \[   x\in \sum_{i\in I} {\rm conv} (A_i) + \sum_{i\not\in I} A_i.\]
\end{theorem}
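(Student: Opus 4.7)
The plan is to deduce Theorem \ref{thm:SF} from Carath\'eodory's theorem applied to a suitable lift into $\RR^{n+m}$. For each $i\in\{1,\dots,m\}$ and each $a\in A_i$, I would consider the lifted point $(a,e_i)\in\RR^n\times\RR^m$, where $e_i$ denotes the $i$-th standard basis vector of $\RR^m$, and form the set $B=\bigcup_{i=1}^m\{(a,e_i):a\in A_i\}\subseteq\RR^{n+m}$.

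Writing the hypothesis $x=\sum_{i=1}^m x_i$ with $x_i\in\conv(A_i)$ as $x_i=\sum_j\lambda_{ij}a_{ij}$, $a_{ij}\in A_i$, $\lambda_{ij}\ge 0$, $\sum_j\lambda_{ij}=1$, the lifted identity reads
\[(x,\mathbf{1})=\sum_{i,j}\lambda_{ij}(a_{ij},e_i),\]
where $\mathbf{1}=(1,\dots,1)\in\RR^m$. The coefficients sum to $m$, so after dividing through by $m$ we see that $(x/m,\mathbf{1}/m)$ is a genuine convex combination of points of $B$. Moreover, $B$ lies entirely in the affine subspace $\RR^n\times H$, with $H=\{y\in\RR^m:\sum_k y_k=1\}$, which has dimension $n+m-1$. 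The affine form of Carath\'eodory therefore yields a representation
\[(x,\mathbf{1})=\sum_{k=1}^{N} c_k(a_k,e_{i_k}),\qquad a_k\in A_{i_k},\quad c_k\ge 0,\quad N\le n+m,\]
and reading off the last $m$ coordinates forces $\sum_{k:\,i_k=i}c_k=1$ for every $i\in\{1,\dots,m\}$.

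Next, group indices by $K_i=\{k:i_k=i\}$. Each $K_i$ is nonempty (the $i$-th coordinate of $\mathbf{1}$ equals $1$), and $\sum_i|K_i|=N\le n+m$, so $\sum_i(|K_i|-1)\le n$. Hence at most $n$ of the sets $K_i$ have cardinality at least $2$; let $I'$ be the set of these ``split'' indices. Setting $x_i:=\sum_{k\in K_i}c_k a_k$, we see that for $i\notin I'$ the unique coefficient in $K_i$ must equal $1$, so $x_i\in A_i$, while for $i\in I'$ the sum $x_i$ is a genuine convex combination and so $x_i\in\conv(A_i)$. Reading the first $n$ coordinates recovers $x=\sum_i x_i$, and padding $I'$ up to exactly $n$ indices (using $A_j\subseteq\conv(A_j)$ for any additional $j$) yields the desired set $I$.

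The main technical obstacle is the dimensional bookkeeping: the affine refinement of Carath\'eodory (giving $n+m$ points instead of $n+m+1$) is precisely what caps the number of ``split'' indices at $n$ rather than $n+1$. The rescaling by $m$ is a small but essential adjustment, without which $\sum_{i,j}\lambda_{ij}=m$ would be incompatible with the normalization required by the convex form of Carath\'eodory.
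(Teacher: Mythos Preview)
The paper does not prove Theorem~\ref{thm:SF}; it merely states the Shapley--Folkman Lemma as a known result and remarks that it generalizes \eqref{eq:carat}. There is therefore no argument in the paper to compare against.

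Your proof is correct and is essentially the standard ``lifting'' argument: embed $\bigcup_i A_i\times\{e_i\}$ into $\RR^n\times\RR^m$, observe that this set lies in an affine subspace of dimension $n+m-1$, apply Carath\'eodory there to obtain a representation with at most $n+m$ summands, and then read off from the last $m$ coordinates that each index $i$ contributes at least one summand, so at most $n$ indices can contribute two or more. The bookkeeping (rescaling by $m$, the affine refinement giving $n+m$ rather than $n+m+1$ points, and the pigeonhole count $\sum_i(|K_i|-1)\le n$) is all handled correctly.
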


We end this section with a  proof of the fact that the only polytope in $\RR^n$ for which $\lambda(P) = 1/(n+1)$ is the simplex. This was first proved by Schneider in \cite{Schneider-1678}.

\begin{proposition}\label{prop:simplex iff lambda is minimal possible}
	For a polytope $P\sub \RR^n$, 
if $\lambda(P) = {1}/{(n+1)}$ then $P$ is an $n$-dimensional simplex. 
\end{proposition}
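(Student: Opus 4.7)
My plan is to prove the contrapositive: if $P \subseteq \RR^n$ is not an $n$-dimensional simplex, then $\lambda(P) > 1/(n+1)$. First, a reduction: if $\dim P = k < n$, viewing $P$ inside its affine hull and invoking \eqref{eq:carat} in dimension $k$ yields $\lambda(P) \ge 1/(k+1) > 1/(n+1)$. So it suffices to handle the case $\dim P = n$ with $|V(P)| \ge n+2$.

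For $x \in P$ and $v \in V(P)$ with $x \ne v$, I will use the quantity $\tau_v(x) := \sup\{\tau \ge 0 : v + \tau(x - v) \in P\}$, the maximal extension of the ray from $v$ through $x$ inside $P$. A direct calculation shows that $x = \lambda v + (1-\lambda) y$ with $y \in P$ is solvable iff $\lambda \le 1 - 1/\tau_v(x)$, so $\lambda(x, P) = \max_{v \in V(P)} (1 - 1/\tau_v(x))$. Each $1 - 1/\tau_v(\cdot)$ is continuous on $P \setminus \{v\}$ and tends to $1$ as $x \to v$, so $\lambda(\cdot, P)$ extends continuously to $P$ with value $1$ at every vertex. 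By compactness the infimum is attained, and so it suffices to prove the pointwise strict inequality $\lambda(x, P) > 1/(n+1)$ for every $x \in P$.

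For a fixed $x \in P$, take a Caratheodory representation $x = \sum_{i=1}^{n+1} \lambda_i v_i$ with $v_i \in V(P)$, $\lambda_i \ge 0$, $\sum \lambda_i = 1$. If some $\lambda_i > 1/(n+1)$, say $\lambda_1$, then $y := \sum_{i \ge 2} (\lambda_i/(1-\lambda_1)) v_i \in P$ gives $x = \lambda_1 v_1 + (1-\lambda_1) y$ and hence $\lambda(x, P) \ge \lambda_1 > 1/(n+1)$. Otherwise all $\lambda_i = 1/(n+1)$, which forces the $v_i$ to be affinely independent and $x$ to be the centroid of the simplex $\Delta := \conv(v_1, \ldots, v_{n+1})$. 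Since $|V(P)| \ge n+2$, I pick a vertex $w \in V(P) \setminus \Delta$ (such $w$ exists because $w$ is an extreme point of $P$ and hence cannot lie in $\Delta \subseteq \conv(V(P) \setminus \{w\})$). Then $w$ lies strictly on the opposite side of some facet hyperplane $H_i := \aff(v_1, \ldots, \hat v_i, \ldots, v_{n+1})$ of $\Delta$ from $v_i$. A local argument shows that the face $F_i := \conv(v_1, \ldots, \hat v_i, \ldots, v_{n+1}) \subseteq H_i$ of $\Delta$ then has $\relint(F_i) \subseteq \intr(P)$: for any $p \in \relint(F_i)$ one can perturb $p$ slightly within $F_i$, toward $v_i$ inside $\Delta$, or toward $w$ inside $\conv(F_i \cup \{w\})$, producing an $\RR^n$-neighborhood of $p$ in $P$. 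The ray $t \mapsto v_i + t(x - v_i)$ reaches the centroid $c_i = (1/n)\sum_{j \ne i} v_j$ of $F_i$ exactly at $t = (n+1)/n$, and since $c_i \in \relint(F_i) \subseteq \intr(P)$ the ray extends past $c_i$ inside $P$; hence $\tau_{v_i}(x) > (n+1)/n$ and $\lambda(x, P) > 1 - n/(n+1) = 1/(n+1)$.

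The hardest part is the centroid case, where every Caratheodory representation uses $n+1$ equally-weighted vertices. The geometric heart of the argument is the observation that, when $P$ is strictly larger than $\Delta$, at least one facet of $\Delta$ must protrude into the interior of $P$, so the ray from the opposite vertex can be pushed past the critical length $(n+1)/n \cdot |x - v_i|$. The preceding continuity/compactness step then converts this pointwise strict inequality into the strict inequality $\lambda(P) > 1/(n+1)$, completing the contradiction.
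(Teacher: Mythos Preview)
Your proof is correct and follows a genuinely different route from the paper's.

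The paper also argues by contrapositive but works with a single uniform $\lambda > 1/(n+1)$: for each simplex $S$ with vertices in $V(P)$ it analyzes the ``uncovered'' set $T(S) = S \setminus ((1-\lambda)S + \lambda V(S))$, shows it is contained in a shrunken copy of $-S$ around the centroid $m(S)$ that collapses to $\{m(S)\}$ as $\lambda \to 1/(n+1)^+$, and then picks $\lambda$ close enough to $1/(n+1)$ so that $T(S_1) \cap T(S_2) = \emptyset$ whenever $S_1,S_2$ differ in exactly one vertex. Finally, for each $x \in P$ it constructs two such neighboring simplices both containing $x$ (by shooting a ray from an unused vertex of $P$ through $x$), forcing $x \notin T(S)$ for at least one of them and hence $x \in (1-\lambda)P + \lambda V(P)$.

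Your approach trades this combinatorial ``two simplices'' device for an analytic one: you prove the pointwise strict inequality $\lambda(x,P) > 1/(n+1)$ for every $x$ and then use continuity of $x \mapsto \lambda(x,P)$ together with compactness of $P$ to pass to the infimum. The geometric heart --- that an extra vertex $w$ forces some facet $F_i$ of the Carath\'eodory simplex to have $\relint(F_i) \subseteq \intr(P)$, so the ray from $v_i$ through the centroid extends strictly past parameter $(n+1)/n$ --- is a clean and direct replacement for the paper's disjointness estimate. Your argument is arguably more elementary; the paper's has the minor advantage of exhibiting an explicit uniform $\lambda$ without an appeal to compactness. One small remark: your perturbation justification of $\relint(F_i) \subseteq \intr(P)$ is informal; a slicker way is to note that any supporting hyperplane of $P$ at $p \in \relint(F_i)$ would have to coincide with $H_i$, which is impossible since $v_i$ and $w$ lie strictly on opposite sides of $H_i$.
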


\begin{proof} Assume $\lambda(P) = 1/(n+1)$. Using  \eqref{eq:carat} we see that 
  $P$ is not contained in any affine hyperplane of $\R^n$.
  For an $n$-dimensional simplex $S$ 
  with center of mass at the origin, the set $S\setminus (\lambda S+(1-\lambda)V(S))$ is relatively easy to analyze. 
Indeed, it is given by the intersection of $S$ with $(n+1)$ half-spaces. More precisely, assume that 
\[ S = \bigcap_{i=1}^{n+1} \{x: \iprod{x}{\theta_i} \le r_i \}\]	with $\theta_i \in S^{n-1}$.  Then, since the center of mass is assumed to be the origin, for the vertex $v_i$ opposite to the facet $\{ x\in S: \iprod{x}{\theta_i} = r_i\}$ we have $\iprod{v_i}{\theta_i} = -nr_i$. Therefore if $\lambda > 1/(n+1)$ we have 
\[ (1-\lambda)S + \lambda v_i = S \cap \{ x: \iprod{x}{\theta_i} \le r_i((1-\lambda) (1+n) - n)\}.\]
	Letting $r_i' = r_i(n-(1-\lambda)(1+n))\le 0$ we thus have (see Figure \ref{fig:Simplex}) 
	\[T:= S\setminus \cup_i ((1-\lambda)S + \lambda v_i )
\subseteq \cap 	\{ x: \iprod{x}{-\theta_i}\le r_i'\}  = - (n-(1-\lambda)(1+n))S.
	\]
In 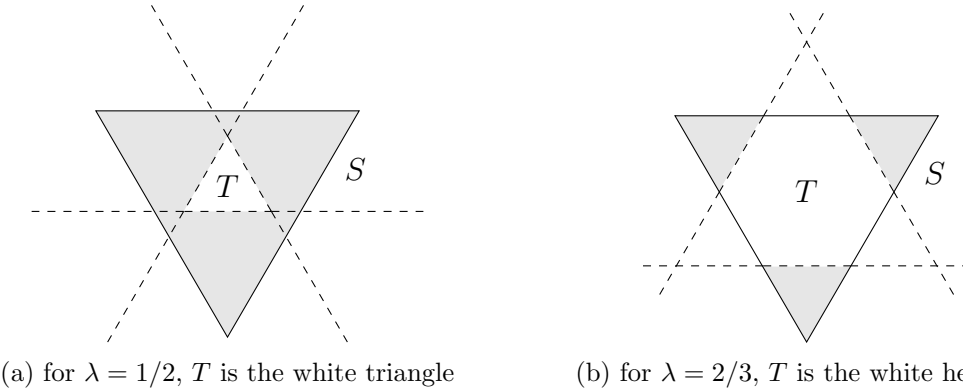
\begin{figure}
	\centering
	\begin{subfigure}{.5\textwidth}
		\centering
\begin{tikzpicture}[scale = 2]
	\fill[gray!20] (30:1) -- (270:1) -- (150:1) -- cycle;
	\fill[white] (330:1/3) -- (90:1/3) -- (210:1/3) -- cycle;
	\draw (30:1) -- (270:1) -- (150:1) -- cycle;
	\draw[dashed] (330:1/3) -- (90:1/3) -- (210:1/3) -- cycle;
	\draw[dashed] (330:1/3) --++(0:1);
	\draw[dashed] (330:1/3) --++(300:1);
	\draw[dashed] (90:1/3) --++(120:1);
	\draw[dashed](90:1/3) --++(60:1);
	\draw[dashed](210:1/3) --++(180:1);
	\draw[dashed](210:1/3) --++(240:1);
	\tkzLabelPoint[anchor=center](30:0){\large $T$}
\tkzLabelPoint[below right,](20:0.75){\large $S$}
\end{tikzpicture}
		\caption{for $\lambda=1/2$, $T$ is the white triangle}
		\label{fig:sub1}
	\end{subfigure}%
\begin{subfigure}{.5\textwidth}
	\centering
\begin{tikzpicture}[scale = 2]
	\fill[gray!20] (30:1) -- (270:1) -- (150:1) -- cycle;
	\fill[white] (330:1) -- (90:1) -- (210:1) -- cycle;
	\draw (30:1) -- (270:1) -- (150:1) -- cycle;
	\draw[dashed] (120:1/1.75) -- (180:1/1.75);
	\draw[dashed] (240:1/1.75) -- (300:1/1.75);
	\draw[dashed] (0:1/1.75) -- (60:1/1.75);
	\draw[dashed] (300:1/1.75) --++(0:0.8);
	\draw[dashed] (0:1/1.75) --++(300:0.8);
	\draw[dashed] (60:1/1.75) --++(120:0.8);
	\draw[dashed](120:1/1.75) --++(60:0.8);
	\draw[dashed](240:1/1.75) --++(180:0.8);
	\draw[dashed](180:1/1.75) --++(240:0.8);
	\tkzLabelPoint[anchor=center](30:0){\large $T$}
	\tkzLabelPoint[below right,](20:0.75){\large $S$}
\end{tikzpicture}
\caption{for $\lambda=2/3$, $T$ is the white hexagon}
\label{fig:sub2}
\end{subfigure}
 \caption{$T$ for different values of $\lambda$- always contained in some dashed simplex $-qS$.}.
\label{fig:Simplex}
\end{figure}
 particular, as $\lambda \to 1/(n+1)$, this set converges to the point $\{0\}$. It is easy to check (by translating $S$) that for a simplex with center of mass $m(S)$ we have, similarly, 
		\[ S\setminus \cup_i ((1-\lambda)S + \lambda v_i )
	\subseteq  - (n-(1-\lambda)(1+n))S +m(S)(1-n+(1-\lambda)(1+n))
	\]
	Again, as $\lambda\to 1/(n+1)$, this set converges to a point, namely $m(S)$.

	Given some polytope $P$ that is not a simplex, consider the set of all simplices with vertices that are a subset of $V(P)$. We pick $\lambda >1/(n+1)$ close enough to $1/(n+1)$ so that the sets  \[ T(S): = S\setminus ((1-\lambda)S + \lambda V(S))
	\subseteq  - (n-(1-\lambda)(1+n))S +m(S)(1-n+(1-\lambda)(1+n))\]
	do not intersect for any pair of simplices $S_1$ and $S_2$ which differ by only one vertex (and so, have in particular different centers of mass). This is clearly possible by the convergence of $T(S)$ to $m(S)$ which we have demonstrated. 
	
	Let $x\in P$. Then $x$ belongs to two simplices with vertices in $V(P)$ that differ by only one vertex. Indeed, $x$ belongs to some simplex $S$ (if it belongs to a lower dimensional simplex, then the assertion is trivial). Consider any vertex $v$ of $P$ which is not participating in $S$, and   the ray emanating from $v$ in direction $x$. This ray intersects $S$ at two points, one of which, $x'$, satisfies that $x\in [x',v]$. Since $x'\in \partial S$ it is in the convex hull of $(n-1)$ vertices of $S$, and along with $v$ these span another simplex $S'$ which includes $x$ and differs from $S$ by only one vertex. Therefore, $x$ cannot belong to both $T(S)$ and $T(S')$. If $x\not\in T(S)$, say, then 
	\[ x\in (1-\lambda)S + \lambda V(S)\sub   (1-\lambda)P +\lambda V(P),	
		\]
as claimed (and similarly for $S'$).
\end{proof}

\bibliographystyle{amsplain}
\bibliography{ref}

\end{document}